\documentclass[final]{siamltex}


\usepackage{amsfonts}
\usepackage{latexsym}
\usepackage[notcite,notref]{showkeys}
\usepackage{amsmath, amssymb}
\usepackage{algorithm}
\usepackage{algpseudocode}
\usepackage{graphicx}
\usepackage[colorlinks,linkcolor=blue]{hyperref}

\marginparwidth 0pt
\oddsidemargin  0pt
\evensidemargin  0pt
\marginparsep 0pt

\topmargin   -.5in
\hoffset -0.1in
\textwidth   6.6in
\textheight  8.5 in

\newtheorem{thm}{Theorem}
\newtheorem{lem}[thm]{Lemma}
\newtheorem{cor}[thm]{Corollary}
\newtheorem{prop}[thm]{Proposition}

\newtheorem{assumption}{Assumption}

\newcommand{\tsum}{\textstyle\sum}

\newcommand{\B}{{\mathcal{B}}}
\newcommand{\N}{{\mathcal{N}}}

\newcommand{\beq}{\begin{equation}}
\newcommand{\eeq}{\end{equation}}
\newcommand{\beqa}{\begin{eqnarray}}
\newcommand{\eeqa}{\end{eqnarray}}
\newcommand{\beqas}{\begin{eqnarray*}}
\newcommand{\eeqas}{\end{eqnarray*}}
\newcommand{\bi}{\begin{itemize}}
\newcommand{\ei}{\end{itemize}}
\newcommand{\ba}{\begin{array}}
\newcommand{\ea}{\end{array}}

\setcounter{page}{1}

\def\argmin{{\rm argmin}}

\def\Argmin{{\rm Argmin}}

\def\exp{{\rm exp}}

\def\vgap{\vspace*{.1in}}

\def\setU{{X}}

\newcommand{\bbe}{\mathbb{E}}

\def\Prob{{\hbox{\rm Prob}}}
\newcommand{\bbr}{\mathbb{R}}

\title{
Algorithms for stochastic optimization \\
with function or expectation constraints
\thanks{Part of the results were first presented at the Annual INFORMS meeting
in Oct, 2015, \url{https://informs.emeetingsonline.com/emeetings/formbuilder/clustersessiondtl.asp?csnno=24236&mmnno=272&ppnno=91687} and
summarized in a previous version entitled ``Algorithms for stochastic optimization with expectation constraints" in 2016.
Please note that this version contains the corrections to a few problems in the paper published by {\em Computational Optimization and Applications}.}}

\author{
     Guanghui Lan
    \thanks{H. Milton Stewart School of Industrial and Systems
    Engineering, Georgia Institute of Technology, Atlanta, GA, 30332.
    (email: {\tt george.lan@isye.gatech.edu}). This author has been supported by NSF CMMI 1637474}
    \and
  Zhiqiang Zhou
    \thanks{H. Milton Stewart School of Industrial and Systems
    Engineering, Georgia Institute of Technology, Atlanta, GA, 30332.
    (email: {\tt zzhoubrian@gatech.edu}). }
}

\begin{document}

\maketitle

\begin{abstract}
This paper considers the problem of minimizing an expectation function over a closed convex set, coupled with a {\color{black} function or expectation} constraint on either
decision variables or problem parameters. We first present a new stochastic approximation (SA) type algorithm, namely the cooperative SA (CSA),
to handle problems with the constraint on devision variables. We show that this algorithm exhibits
the optimal ${\cal O}(1/\epsilon^2)$ rate of convergence, in terms of both optimality gap and constraint violation,
when the objective and constraint functions are generally convex, where $\epsilon$ denotes the optimality gap and infeasibility. Moreover, we show that
this rate of convergence can be improved to
${\cal O}(1/\epsilon)$ if the objective and constraint functions are strongly convex.
We then present a variant of CSA, namely the cooperative stochastic parameter approximation (CSPA) algorithm, to deal with
the situation when
the constraint is defined over
problem parameters and show that it exhibits similar optimal rate of convergence to CSA.
It is worth noting that CSA and CSPA are primal methods  which do not require the iterations on the dual space and/or
the estimation on the size of the dual variables.
To the best of our knowledge, this is the first time that such optimal SA methods for
solving function or expectation constrained stochastic optimization are presented in the literature.


\vspace{.1in}

\noindent {\bf Keywords:} convex programming, stochastic optimization, complexity,
subgradient method

\vspace{.07in}

\noindent {\bf AMS 2000 subject classification:} 90C25, 90C06, 90C22, 49M37

\end{abstract}

\vspace{0.1cm}

\setcounter{equation}{0}
\section{Introduction}
In this paper, we study two related stochastic programming (SP) problems with function or expectation constraints.
The first one is a classical SP problem with the {\color{black} function constraint} over the decision variables, formally defined as
\begin{equation}\label{1.1}
\begin{aligned}
\min \  & f(x):=\bbe[F(x,\xi)]\\
  {\rm s.t.}\  &  g(x)\leq 0, \\
   & x\in X,
\end{aligned}
\end{equation}
where $X \subseteq \bbr^n$ is a convex compact set, $\xi$ are
random vectors supported on ${\cal P} \subseteq \bbr^p$,
$F(x, \xi):X\times {\cal P}\mapsto \bbr$ and  {\color{black} $g(x):X\mapsto \bbr$ are closed convex functions w.r.t. $x$
for  a.e. $\xi \in {\cal P}$}. Moreover, we assume that $\xi$ are independent of $x$.
Under these assumptions, \eqref{1.1} is a convex optimization problem.

{\color{black} In particular, the constraint function $g(x)$ in problem \eqref{1.1} can be given in the form of expectation as
\begin{equation}\label{1.1exp}
g(x) := \bbe_{\xi}[G(x,\xi)],
\end{equation}
where $G(x, \xi):X\times {\color{black} \cal P}\mapsto \bbr$ are closed convex functions w.r.t. $x$
for  a.e. $\xi \in {\color{black} \cal P}$.
Such problems have} many applications in operations research, finance and data analysis.
One motivating example is SP with the conditional value at risk (CVaR) constraint. In an important work \cite{roc00}, Rockafellar and  Uryasev shows that
a class of asset allocation problem can be modeled as
\begin{equation} \label{CVaR}
\begin{array}{ll}
  \min_{x, \tau} &  -\mu^T x  \\
  \mbox{s.t.} & \tau + \tfrac{1}{\beta}\mathbb{E}\{[-\xi^T x-\tau]_+\}\leq 0, \\
   &  \tsum_{i=1}^n x_i=1, x \ge 0,
\end{array}
\end{equation}
where $\xi$ denotes the random return with mean $\mu = \mathbb{E}[\xi]$. Expectation constraints also play an important
role in providing tight convex approximation to chance constrained problems (e.g., Nemirovksi and Shapiro \cite{nemirovski2006convex}).
Some other important applications of \eqref{1.1} can be found in semi-supervised learning (see, e.g., \cite{ChaSchZien06}).
For example, one can use the objective function to define the fidelity of the model for the labelled data, while using
the constraint to enforce some other properties of the model for the unlabelled data (e.g., proximity for data with similar features).

While problem \eqref{1.1} covers a wide class of problems with constraints over the decision variables,
in practice we often encounter the situation where the constraint is defined over
the problem parameters. Under these circumstances our goal is
 to find a pair of parameters $x^*$ and decision variables $y^*(x^*)$ such that 
\begin{align}
 y^*(x^*) &\in \Argmin_{y\in Y} \left\{ \phi(x^*, y):=\bbe[\Phi(x^*,y,\zeta)]\right\}, \label{para1}\\
 x^* &\in \left\{ x \in X | g(x):= \bbe[G(x,\xi)] \leq 0 \right\}. \label{para2}
\end{align}
Here $\Phi(x,y,\zeta)$ is convex w.r.t. $y$ for a.e. $\zeta \in {\color{black} {\cal Q} \subseteq \bbr^q}$ but possibly nonconvex w.r.t. $(x,y)$ jointly, and
$g(\cdot)$ is convex w.r.t. $x$.
Moreover, we assume that $\zeta$ is independent of $x$ and $y$, while
$\zeta$ is not necessarily independent of $x^*$. Note that \eqref{para1}-\eqref{para2}
defines a pair of optimization and feasibility problems coupled through the following ways: a)
the solution to \eqref{para2} defines an admissible parameter of \eqref{para1};  b)
$\xi$
can be a random variable with probability distribution parameterized
by $x^*$.

Problem \eqref{para1}-\eqref{para2} also has many applications, especially in data analysis.
One such example is to learn a classifier $w$
with a certain metric $\bar A$ using the support vector machine model:
\begin{align}
&\min_w \mathbb{E}[l(w;(\bar A^{\tfrac{1}{2}}u,v))] + \tfrac{\lambda}{2}\|w\|^2, \label{svm} \\
&\bar A \in \left\{A\succeq 0 |  \mathbb{E}[|{\rm Tr}(A(u_i-v_j)(u_i-v_j)^T)-b_{ij}|]\leq 0, {\rm Tr}(A) \leq C\right\}, \label{metric}
\end{align}
where $l(w; (\theta,y)) = \max\{0,1-y\langle w,\theta \rangle\}$ denotes the hinge loss
function, $u, u_i, u_j \in \bbr^n$, $v, v_i, v_j \in \{+1,-1\}$, and $b_{ij}\in \bbr$ are the random
variables satisfying certain probability distributions, and $\lambda, C > 0$ are certain
given parameters. In this problem,  \eqref{svm} is used to
learn the classifier $w$ by using the metric $\bar A$ satisfying
certain requirements in \eqref{metric}, including the low rank (or nuclear norm) assumption.
Problem \eqref{para1}-\eqref{para2} can also be used in some data-driven
applications, where one can use \eqref{para2} to specify the parameters for the probabilistic models
associated with the random variable $\xi$, as well as some other applications for multi-objective
stochastic optimization.

In spite of its wide applicability, the study on efficient solution methods for expectation constrained optimization
is still limited. For the sake of simplicity, suppose for now that $\xi$ is given as a deterministic vector and
hence that the objective functions $f$ and $\phi$ in
\eqref{1.1} and \eqref{para1} are easily computable.
One popular method to solve stochastic optimization problems is called the sample average
approximation (SAA) approach (\cite{sha03,ksh,wang2008sample}).
To apply SAA for \eqref{1.1} and \eqref{para2}, we first generate
a random sample $\xi_i, i = 1, \ldots, N$, for some $N \ge 1$ and then approximate
$g$ by $\tilde g (x)=\tfrac{1}{N}\tsum_{i=1}^N G(x,\xi_i)$.
The main issues associated with the SAA for solving \eqref{1.1} include: i) the deterministic SAA problem
might not be feasible;
ii) the resulting
deterministic SAA problem is often difficult to solve especially when $N$ is large, requiring going through the whole sample $\{\xi_1, \ldots, \xi_N\}$
at each iteration; and ii) it is not applicable to the on-line setting where one needs to update the decision variable whenever a new piece of sample $\xi_i$, $i = 1, \ldots N$, is
collected.

A different approach to solve stochastic optimization problems is called stochastic approximation (SA), which
was initially proposed in a seminal paper by Robbins and Monro\cite{RobMon51-1} in 1951 for solving strongly convex SP problems.
This algorithm mimics the gradient descent method by using the stochastic gradient $F'(x, \xi_i)$ rather than the original gradient $f'(x)$
for minimizing $f(x)$ in \eqref{1.1} over a simple convex set $X$ (see also \cite{BMP87,Erm83-1,Gaiv78-1,Pflug96-1,RusSys86-1,spall2005introduction}).
An important improvement of this algorithm was developed by Polyak and Juditsky(\cite{pol90},\cite{pol92}) through using longer steps and then averaging the obtained iterates. Their method was shown to be more robust with respect to the choice of stepsize than classic SA method for solving strongly convex SP problems.
More recently, Nemirovski et al. \cite{nemirovski2009robust} presented a modified SA method, namely, the mirror descent SA method,
and demonstrated its superior numerical performance for solving a general class of nonsmooth convex SP problems.
The SA algorithms have been intensively studied over the past few years (see, e.g., \cite{Lan10-3,GhaLan12-2a,GhaLan13-1,DSYA10,Xiao10-1,GhaLan13-2,Nedic12-1,SchRouBac13-1}).
It should be noted, however, that none of these SA algorithms are applicable to expectation constrained problems, since
each iteration of these algorithms requires the projection over the feasible set $\{x \in X| g(x) \le 0\}$, which is computationally prohibitive as $g$ is given in the form
of expectation. 

In this paper, we intend to develop efficient solution methods for solving expectation constrained problems
by properly addressing the aforementioned issues associated with existing SA methods.
Our contribution mainly exists in the following several aspects.
Firstly, inspired by Polayk's subgradient method for constrained optimization~\cite{polyak1967general}{\color{black} and Nesterov's note \cite{Nest04}}, we
present a new SA algorithm, namely the cooperative SA (CSA) method for solving the SP problem with
expectation constraint in \eqref{1.1} with constraint \eqref{1.1exp}. At the $k$-th iteration, CSA
performs a projected subgradient step along either $F'(x_k, \xi_k)$ or
$G'(x_k, \xi_k)$ over the set $X$, {\color{black} depending on whether an unbiased estimator $\hat G_k$ of $g(x_k)$ satisfies $\hat G_k \le \eta_k$ or not.
Observe that the aforementioned estimator $\hat G_k$ can be easily computed in many cases by using the structure of the problem, e.g., the linear dependence $\xi^Tx$ in \eqref{CVaR} (see Section 4.1 in \cite{lns11} and Section 2.1  for more details).
We introduce an index set $\B:= \{1 \le k \le N: \hat G_k \le \eta_k\}$ in order to compute the
output solution as a weighted average of the iterates in $\B$}. By carefully bounding $|\B|$, we show that
the number of iterations performed by the CSA algorithm to find an $\epsilon$-solution of \eqref{1.1}, i.e.,
a point $\bar x \in X$ s.t.
$f(\bar{x})-f^*\leq \epsilon$ and $g(\bar x) \le \epsilon$ with high probability, can be bounded by ${\cal O}(1/\epsilon^2)$.
Moreover, when  both $f$ and $g$ are strongly convex, by using a different set of algorithmic parameters
we show that the complexity of the CSA method can
be significantly improved to ${\cal O}(1/\epsilon)$. It it is worth mentioning that this result
is new even for solving deterministic strongly convex problems with function constraints.
We also established the large-deviation properties for the CSA method under certain
light-tail assumptions.

Secondly, we develop a variant of CSA, namely the cooperative stochastic parameter
approximation (CSPA) method for solving the SP problem with expectation constraints
on problem parameters in \eqref{para1}-\eqref{para2}. {\color{black} In CSPA, we update parameter $x$ by running the mirror descend SA iterates
whenever a certain easily verifiable condition is violated.
Otherwise, we update the decision variable $y$ while keeping $x$ intact}. We show that the number of iterations
performed by the CSPA algorithm to find an $\epsilon$-solution of \eqref{para1}-\eqref{para2},
i.e., a pair of solution $(\bar x, \bar y)$ s.t. $g(\bar x) \le \epsilon$ and
$\bbe[\phi(\bar x, \bar y) - \phi(\bar x, y^*(\bar x)] \le \epsilon$ with high probability, can be bounded by
${\cal O}(1/\epsilon^2)$. Moreover, this bound can be significantly improved
to ${\cal O}(1/\epsilon)$ if $G$ and $\Phi$ are strongly convex w.r.t. $x$ and $y$, respectively.

To the best of our knowledge, all the aforementioned algorithmic developments are
new in the stochastic optimization literature. It is also worth mentioning a few alternative or related
methods to solve \eqref{1.1} and \eqref{para1}-\eqref{para2}. First,
without efficient methods to directly solve \eqref{1.1}, current practice resorts to
reformulate it as $\min_{x \in X} \lambda f(x) + (1-\lambda) g(x)$ for some $\lambda \in (0,1)$. However,
one then has to face the difficulty of properly
specifying $\lambda$, since an optimal selection would depend on the unknown dual multiplier.
As a consequence,  we cannot assess the quality of the solutions obtained by solving this reformulated problem.
Second, one alternative approach to solve \eqref{1.1} is the penalty-based or primal-dual
approach. However these methods would require either the estimation of the optimal dual variables or iterations performed
on the dual space (see \cite{CheLanOu13-1}, \cite{nemirovski2009robust} and \cite{LaLuMo11-1}). Moreover,
the rate of convergence of these methods for function constrained problems has not been well-understood other than
conic constraints even for the deterministic setting. Third, in \cite{jiang2014solution} (and see references therein), Jiang and Shanbhag
developed a coupled SA method to solve a stochastic optimization problem with parameters given by
another optimization problem, and hence is not applicable to problem~\eqref{para1}-\eqref{para2}.
Moreover, each iteration of their method requires two stochastic subgradient projection
steps and hence is more expensive than that of CSPA.

The remaining part of this paper is organized as follows. In Section 2, we present the CSA algorithm
and establish its convergence properties under general convexity and strong convexity assumptions.
Then in Section 3, we develop a variant of the CSA algorithm, namely the CSPA for solving SP problems
with the expectation constraint over problem parameters and discuss its convergence properties.
We then present some numerical results for these new SA methods in section 4.
Finally some concluding remarks are added in Section 5.

\setcounter{equation}{0}
\section{function or expectation constraints over decision variables}
In this section we present the cooperative SA (CSA) algorithm for solving convex stochastic optimization problems with the  constraint
over decision variables. More specifically, we first briefly review the
distance generating function and prox-mapping in Subsection 2.1. {\color{black} We then describe the CSA algorithm in Subsection 2.2 and discuss its convergence properties in terms of expectation and large deviation for solving
general convex problems in Subsection 2.3. Then we show how to apply the CSA algorithm to problem \eqref{1.1} with expectation constraint and discuss its large deviation properties in Subsection 2.4. Finally, we show how to improve the convergence of this algorithm by imposing strong convexity assumptions to problem \eqref{1.1} in Subsection 2.5}.

\subsection{Preliminary: prox-mapping}
Recall that a function $\omega_X: X \mapsto R$ is a distance generating function with parameter $\alpha$, if $\omega_X$ is continuously differentiable and strongly convex with parameter $\alpha$ with respect to $\|\cdot\|$. Without loss of generality, we assume throughout this paper that $\alpha=1$, because we can always rescale $\omega_X(x)$ to $\bar{\omega}_X(x)= \omega_X(x)/\alpha$. Therefore, we have
$$\langle x-z, \nabla\omega_X(x) - \nabla\omega_X(z) \rangle \geq \|x-z\|^2, \forall x,z\in X.$$
The prox-function associated with $\omega$ is given by
$$V_X(z,x) = \omega_X(x) - \omega_X(z) - \langle \nabla\omega_X(z), x-z \rangle. $$
$V_X(\cdot,\cdot)$ is also called the Bregman's distance, which was initially studied by Bregman \cite{bregman1967relaxation} and later by many others (see \cite{auslender2006interior},\cite{BBC03-1} and \cite{Teb97-1}). In this paper we assume the prox-function $V_X(x,z)$ is chosen such that, for a given $x\in X$, the prox-mapping $P_{x,X} : \mathbb{R}^n \mapsto\mathbb{R}^n$ defined as
\begin{equation}\label{Prox-mapping def}
    P_{x,X}(\cdot) := \argmin_{z\in X}\{\langle\cdot,z\rangle+V_X(x,z)\}
\end{equation}
is easily computed.

It can be seen from the strong convexity of $\omega(\cdot,\cdot)$ that
\begin{equation}\label{V(,)lowerbound}
    V_X(x,z)\geq \tfrac{1}{2}\|x-z\|^2, \forall x,z \in X.
\end{equation}
Whenever the set $X$ is bounded, the distance generating function $\omega_X$ also gives rise to the diameter of $X$ that will be used frequently in our convergence analysis:
\begin{equation}\label{D_X_def}
D_X\equiv D_{X,\omega_X} := \sqrt{\max_{x, z \in X}V_X(x,z)}.
\end{equation}

The following lemma follows from the optimality condition of \eqref{Prox-mapping def} and the definition
of the prox-function (see the proof in \cite{nemirovski2009robust}).
\begin{lem}\label{Lemma 1}
For every $u,x\in X$, and $y\in \bbr^n$, we have
$$ V_X(P_{x,X}(y),u)\leq V_X(x,u)+ y^T(u-x)+ \tfrac{1}{2}\|y\|_*^2,
$$
where the $\|\cdot\|_*$ denotes the conjugate of $\|\cdot\|$, i.e., $\|y\|_* = \max \{\langle x, y\rangle|\|x\|\leq 1\}$.
\end{lem}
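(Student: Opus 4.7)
The plan is to reduce the inequality to a chain of three standard manipulations: (i) use the first-order optimality condition of the minimization defining $P_{x,X}(y)$, (ii) use the three-point identity for Bregman divergences, and (iii) absorb a leftover cross term via Young's inequality combined with the strong convexity lower bound \eqref{V(,)lowerbound}.

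First I would set $x^+ := P_{x,X}(y)$ and write the first-order optimality condition for the problem in \eqref{Prox-mapping def}. Since $\nabla_z V_X(x,z) = \nabla\omega_X(z) - \nabla\omega_X(x)$, this condition reads
\[
\langle y + \nabla\omega_X(x^+) - \nabla\omega_X(x),\, u - x^+\rangle \geq 0 \quad \forall u \in X,
\]
equivalently $\langle \nabla\omega_X(x) - \nabla\omega_X(x^+),\, u - x^+\rangle \leq \langle y,\, u - x^+\rangle$. Next I would establish the three-point identity
\[
V_X(x^+,u) - V_X(x,u) = -V_X(x,x^+) + \langle \nabla\omega_X(x) - \nabla\omega_X(x^+),\, u - x^+\rangle,
\]
which is a direct algebraic computation from the definition $V_X(z,w) = \omega_X(w) - \omega_X(z) - \langle \nabla\omega_X(z), w-z\rangle$.

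Combining the two displays gives $V_X(x^+,u) - V_X(x,u) \leq \langle y, u - x^+\rangle - V_X(x,x^+)$. I would then split $\langle y, u - x^+\rangle = \langle y, u - x\rangle + \langle y, x - x^+\rangle$ and bound the second piece using the Fenchel--Young inequality,
\[
\langle y, x - x^+\rangle \leq \tfrac{1}{2}\|y\|_*^2 + \tfrac{1}{2}\|x - x^+\|^2,
\]
after which the strong convexity lower bound $V_X(x,x^+) \geq \tfrac{1}{2}\|x - x^+\|^2$ from \eqref{V(,)lowerbound} cancels the $\tfrac{1}{2}\|x - x^+\|^2$ term, yielding the claimed bound.

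The only mildly delicate step is the three-point identity, since it is easy to drop a sign when grouping the $\langle \nabla\omega_X(x), u - x\rangle$ and $\langle \nabla\omega_X(x), x^+ - x\rangle$ terms; once it is in hand, the remainder is routine and follows the standard mirror-descent lemma pattern. Everything uses only strong convexity of $\omega_X$ with parameter $1$ and the definitions already set up, so no additional assumptions on $X$ (beyond convexity) or on $\omega_X$ are required.
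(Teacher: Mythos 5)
Your proof is correct and follows exactly the standard argument that the paper defers to (it cites \cite{nemirovski2009robust} rather than reproducing the proof): first-order optimality of the prox-mapping, the three-point identity for the Bregman distance, and Young's inequality cancelled against the strong-convexity lower bound \eqref{V(,)lowerbound}. The three-point identity as you state it checks out, and all signs are consistent with the paper's conventions for $V_X$, so nothing further is needed.
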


\subsection{The CSA method}
In this section, we present a generic algorithmic framework for solving the constrained optimization problem in \eqref{1.1}. We assume the expectation function $f(x)$ and constraint $g(x)$, in addition to being well-defined and finite-valued for every $x\in X$, are continuous and convex on $X$.

The CSA method can be viewed as
a stochastic counterpart of Polayk's subgradient method, which was originally
designed for solving deterministic nonsmooth convex optimization problems (see \cite{polyak1967general} and a more recent generalization in \cite{beck2010comirror}).
At each iterate $x_k$, $k \ge 0$, depending on whether $g(x_k) \le \eta_k$ for some tolerance $\eta_k >0$, it moves either along
the subgradient direction $f'(x_k)$ or $g'(x_k)$,
with an appropriately chosen stepsize $\gamma_k$ which also depends on $\|f'(x_k)\|_*$ and $\|g'(x_k)\|_*$. However, Polayk's subgradient method cannot be applied
to solve \eqref{1.1} because we do not have access to exact information about $f'$, $g'$ and $g$.
The CSA method differs from
Polyak's subgradient method in the following three aspects. Firstly, the search direction $h_k$ is defined in a stochastic manner: we first check
if the solution $x_k$ we computed at iteration k violates the condition $\hat G_k\leq \eta_k$ for some $\eta_k \geq 0$. If so, we set the $h_k = G'(x_k,\xi_k)$ for a random realization $\xi_k$ of $\xi$ (Note that for deterministic constraint in \eqref{1.1}, $h_k = g'(x_k)$) in order to control the violation of expectation constraint. Otherwise, we set $h_k = F'(x_k,\xi_k)$. Secondly, for some $1\leq s\leq N$, we partition the indices $I=\{s,...,N\}$ into two subsets: $\B = \{s\leq k\leq N| \hat G_k\leq \eta_k\}$ and $\N = I \setminus \B$, and define the output $\bar{x}_{N,s}$ as an ergodic mean of $x_k$ over $\B$. This differs from the Polyak's subgradient method that defines the output solution as the best $x_k, k\in \B$, with the smallest objective value. Thirdly,
while the original Polayk's subgradient method were developed only for general nonsmooth problems, we show that the
CSA method
also exhibits an optimal rate of convergence for solving strongly convex problems by properly choosing $\{\gamma_k\}$ and $\{\eta_k\}$.
\begin{algorithm}
\caption{The cooperative SA algorithm}\label{CSA alg}
\indent\ \ \  {\bf Input:} initial point $x_1\in X$, stepsizes $\{\gamma_k\}$ and tolerances $\{\eta_k\}$.
\begin{algorithmic}
 \State {\bf for} $k = 1, 2, \ldots, N$ \\
  \ \ {\color{black} Let $\hat G_k$ be an unbiased estimator of $g(x_k)$.} Set
  \begin{align}
  h_k=& \left\{
                   \begin{array}{ll}
                     F'(x_k,\xi_k), & \hbox{if {\color{black} $\hat G_k\leq\eta_k$};} \label{f_iter5.1} \\
                     G'(x_k,\xi_k), & \hbox{otherwise.}
                   \end{array}
                   \right.\\
  x_{k+1}= & P_{x_k,X}(\gamma_k h_k ). \label{f_iter5.2}
  \end{align}
  \State {\bf end for}
\State {\bf Output}:
Set {\color{black} $\B =\{s\leq k\leq N| \hat G_k\leq \eta_k\}$} for some $1\leq s\leq N$, and define the output
\begin{equation}\label{Solution5.1}
    \bar{x}_{N,s}=(\tsum_{k\in \mathcal{B}}\gamma_k)^{-1}(\tsum_{k\in \mathcal{B}}\gamma_kx_k),
\end{equation}

\end{algorithmic}
\end{algorithm}

{\color{black} Notice that every iteration of CSA requires an unbiased estimator of $g(x_k)$.
Suppose there is no uncertainty associated with the constraint in \eqref{1.1}, we can evaluate $g(x_k)$ exactly.
If $g$ is given in the form of expectation,  one natural way is to generate a J-sized i.i.d. random sample of $\xi$
and then evaluate the constraint function value by $\hat G_k = \tfrac{1}{J}\tsum_{j=1}^J G(x_k,\xi_j)$. However,
this basic scheme can be much improved by using some structural information for constraint evaluation. For instance,
one ubiquitous structure existing in machine learning and portfolio optimization applications is
the linear combination of $\xi^Tx$.
For a given $x \in X$, we can define a new random variable $\bar \xi= \xi^Tx$ and generate samples of $\bar \xi$ instead of $\xi$. $\bar \xi$ is only of dimension one and it is computationally much cheaper to simulate.
Given the distribution of $\xi$, below we provide a few examples where the distribution of $\bar \xi$ can be explicitly computed or approximated.
For instance, if $x\in \mathbb{R}^d$, $\xi_i$ are independent normal $N(\mu_i,\sigma_i)$, then $\bar\xi$ follows $N(\tsum_{i=1}^d \mu_i,[\tsum_{i=1}^d x_i^2\sigma_i^2]^{1/2})$. If $\xi_i$ follows independent $\exp(\lambda_i)$, then the probability density function of $\bar \xi$ is $$
f_{\bar \xi}(y) = (\prod_{i=1}^d\hat\lambda_i)\tsum_{j=1}^d \tfrac{e^{-\hat\lambda_j}y}{\prod_{k\neq j,k=1}^d (\hat \lambda_k\hat \lambda_j)},$$
where $\hat \lambda_i = \lambda_i/x_i$. If $\xi_i$ follows independent $\text{Uniform}(a,b)$, then the cumulative distribution function of $\bar \xi$ is
\begin{align*}
F_{\bar \xi}(y) = & \tfrac{1}{d!\prod_{i=1}^dx_i}\{(\tfrac{y-a\tsum_{i=1}^dx_i}{b-a}^+)^d  + \tsum_{v=1}^d(-1)^v
\tsum_{j_1=1}^d \tsum_{j_2 = j_1+1}^d \ldots \\
&\tsum_{j_v = j_{v-1}+1}^d \{[\tfrac{y-a\tsum_{i=1}^dx_i}{b-a}-(x_{j_1}+x_{j_2}+\ldots+x_{j_v})]^+\} \}.
\end{align*}
If the $\xi_i$ are dependent normal random variables with mean $\mu$ and covariance $C$ (by Cholesky decomposition, $C=LL'$),
we can estimate $\tsum_{i=1}\xi_ix_i$ by
$\tsum_{i=1}^d \mu_ix_i+ \bar r [\tsum_{i=1}^d(L^Tx)_i^2]^{1/2}$,
where $\bar r$ follows $N(0,1)$.
In fact, when the dimension $d$ is large enough, by central limit theorem, we can use a normal distribution to approximate the new random variable $\bar \xi$.
These are a few examples showing that to simulate $\bar \xi$ can be much faster than to simulate the original random variables for constraint evaluation.
}

\subsection{Convergence of CSA for SP with function constraints}
In this subsection, we consider the case when the constraint function $g$ is deterministic (i.e., $\hat G_k = g'(x_k)$).
Our goal is to establish the rate of convergence associated with CSA, in terms of both the distance to the optimal value and the violation of constraints. It should also be noted that Algorithm~\ref{CSA alg} is conceptional only as we have not specified a few algorithmic parameters (e.g. $\{\gamma_k\}$ and $\{\eta_k\}$). We will come back to this issue after establishing some general properties about this method. Throughout this subsection, we make the following assumptions.\newline

\begin{assumption}
For any $x\in X$, a.e. $\xi \in {\cal P}$,
$$\mathbb{E}[\|F'(x,\xi)\|_*^2]\leq M_F^2\ \ \text{and}\ \ \ \|g'(x)\|_*^2\leq M_G^2,$$
where $F'(x,\xi) \in \partial_x F(x,\xi)$ and $g'(x) \in \partial_x g(x)$.
\end{assumption}

The following result establishes a simple but important recursion about the CSA method for problem~\eqref{1.1}.
\begin{prop}\label{Proposition 1}
For any $1\leq s\leq N$, we have
\begin{equation}\label{2.1}
    \tsum_{k\in \N}\gamma_k(\eta_k - g(x))+\tsum_{k\in \B}\gamma_k\langle F'(x_k,\xi_k),x_k-x\rangle\leq V(x_s,x)+\tfrac{1}{2}\tsum_{k\in\B}\gamma_k^2\| F'(x_k,\xi_k)\|_*^2+\tfrac{1}{2}\tsum_{k\in\N}\gamma_k^2\| g'(x_k)\|_*^2,
\end{equation}
for all $x\in X$.
\end{prop}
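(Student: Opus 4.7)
The plan is to apply Lemma~\ref{Lemma 1} to the prox-step $x_{k+1} = P_{x_k,X}(\gamma_k h_k)$ for each $k \in \{s, \ldots, N\}$, split the resulting telescoping inequality according to whether $k \in \mathcal{B}$ or $k \in \mathcal{N}$, and treat each case using what is available about $h_k$ in that case. The telescoping structure of the Bregman distance is what produces the leading $V(x_s, x)$ term on the right-hand side.

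More concretely, I would first invoke Lemma~\ref{Lemma 1} with $u = x$, $x$ replaced by $x_k$, and $y = \gamma_k h_k$, to obtain
\begin{equation*}
V_X(x_{k+1}, x) \leq V_X(x_k, x) + \gamma_k \langle h_k, x - x_k \rangle + \tfrac{1}{2}\gamma_k^2 \|h_k\|_*^2.
\end{equation*}
Rearranging gives $\gamma_k \langle h_k, x_k - x \rangle \leq V_X(x_k,x) - V_X(x_{k+1}, x) + \tfrac{1}{2}\gamma_k^2 \|h_k\|_*^2$. Summing this inequality over $k = s, \ldots, N$ and using telescoping, together with $V_X(x_{N+1}, x) \geq 0$, yields
\begin{equation*}
\sum_{k=s}^{N} \gamma_k \langle h_k, x_k - x \rangle \leq V_X(x_s, x) + \tfrac{1}{2}\sum_{k=s}^N \gamma_k^2 \|h_k\|_*^2.
\end{equation*}

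Next I split the left-hand sum according to the partition $\{s,\ldots,N\} = \mathcal{B} \cup \mathcal{N}$. For $k \in \mathcal{B}$ we have $h_k = F'(x_k, \zeta_k)$ by definition, and the term $\gamma_k \langle F'(x_k,\zeta_k), x_k - x\rangle$ appears verbatim on the left of the target inequality. For $k \in \mathcal{N}$ we have $h_k = G'(x_k,\xi_k)$ together with $G(x_k,\xi_k) > \eta_k$. Here I would use convexity of $G(\cdot,\xi_k)$, which gives
\begin{equation*}
\langle G'(x_k,\xi_k), x_k - x \rangle \geq G(x_k, \xi_k) - G(x,\xi_k) > \eta_k - G(x, \xi_k),
\end{equation*}
so that $\gamma_k(\eta_k - G(x,\xi_k)) \leq \gamma_k \langle G'(x_k,\xi_k), x_k - x\rangle$. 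This is precisely the manipulation that produces the $\eta_k - G(x,\xi_k)$ summand on the left-hand side of \eqref{2.1}, and it is the only step that exploits subgradient convexity rather than Lemma~\ref{Lemma 1}.

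Combining these lower bounds and the partition of the $\|h_k\|_*^2$ terms (which becomes $\|F'(x_k,\zeta_k)\|_*^2$ on $\mathcal{B}$ and $\|G'(x_k,\xi_k)\|_*^2$ on $\mathcal{N}$) yields \eqref{2.1}. The main conceptual point to get right, and the one place where a small slip would derail the proof, is the direction of the inequality when replacing $G(x_k,\xi_k)$ by $\eta_k$ on indices $k \in \mathcal{N}$: the strict inequality $G(x_k,\xi_k) > \eta_k$ is needed so that $\eta_k - G(x,\xi_k) \leq G(x_k,\xi_k) - G(x,\xi_k)$, after which convexity of $G(\cdot,\xi_k)$ closes the gap. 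Everything else is a routine telescoping argument using Lemma~\ref{Lemma 1}.
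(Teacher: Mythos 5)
Your proposal is correct and follows essentially the same route as the paper's proof: apply Lemma~\ref{Lemma 1} to each prox-step, telescope the Bregman distances, keep the inner product verbatim on $\mathcal{B}$, and on $\mathcal{N}$ use convexity of $G(\cdot,\xi_k)$ together with $G(x_k,\xi_k)>\eta_k$ to lower-bound $\langle G'(x_k,\xi_k),x_k-x\rangle$ by $\eta_k-G(x,\xi_k)$. Your explicit remark about the direction of the inequality at that step is exactly the one place the paper's argument also hinges on, so there is nothing to add.
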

\begin{proof}
For any $s\leq k\leq N$, using Lemma \ref{Lemma 1}, we have
\begin{equation}\label{Vrecursion}
     V(x_{k+1},x)\leq V( x_k,x)+\gamma_k\langle h_k,x-x_k\rangle+\tfrac{1}{2}\gamma_k^2\| h_k\|_*^2.
\end{equation}
Observe that if $k\in \B$, we have $h_k=F'(x_k,\xi_k),$ and
$$\langle h_k,x_k-x\rangle=\langle F'(x_k,\xi_k),x_k-x\rangle.$$
Moreover, if $k\in \mathcal{N}$, we have $h_k=g'(x_k)$ and
$$
\begin{aligned}
 \langle h_k,x_k-x\rangle& = \langle g'(x_k),x_k-x\rangle
  \geq g(x_k) - g(x)
  \geq \eta_k - g(x).
\end{aligned}
$$
Summing up the inequalities in \eqref{Vrecursion} from $k=s$ to $N$ and using the previous two observations, we obtain
\begin{equation}
\begin{aligned}
    V(x_{k+1},x) &\leq V(x_s,x)-\tsum_{k=s}^N\gamma_k\langle h_k,x_k-x\rangle+\tfrac{1}{2}\tsum_{k=s}^N\gamma_k^2\| h_k\|_*^2 \\
 &\leq  V(x_s,x)-\left[\tsum_{k\in \N}\gamma_k\langle g'(x_k), x_k-x\rangle+\tsum_{k\in \B}\gamma_k\langle F'(x_k,\xi_k),x_k-x\rangle\right]+\tfrac{1}{2}\tsum_{k=s}^N\gamma_k^2\| h_k\|_*^2 \\
    & \leq V(x_s,x)-\left[\tsum_{k\in\N}\gamma_k(\eta_k - g(x))+\tsum_{k\in \B}\gamma_k\langle F'(x_k,\xi_k),x_k-x\rangle\right]\\
    & \quad +\tfrac{1}{2}\tsum_{k\in\B}\gamma_k^2\| F'(x_k,\xi_k)\|_*^2 +\tfrac{1}{2}\tsum_{k\in\N}\gamma_k^2\| g'(x_k)\|_*^2.
\end{aligned}
\end{equation}
Rearranging the terms in above inequality, we obtain \eqref{2.1}
\end{proof}

\vgap

Using Proposition \ref{Proposition 1}, we present below a sufficient condition under which the output solution $\bar{x}_{N,s}$ is well-defined.
\begin{lem}\label{Lemma 2}
Let $x^*$ be an optimal solution of \eqref{1.1}. If, for some $\rho\in (0,1)$, we have
\begin{equation}\label{2.2}
    \tfrac{N-s+1}{2}\min_{k\in \N}\gamma_k\eta_k >\tfrac{1}{\rho} \left(D_X^2+\tfrac{M^2}{2}\tsum_{k=s}^N\gamma_k^2\right),
\end{equation}
where $M = \max\{M_F, M_G\}$, 
then with probability at least $1-\rho$, we have 
\begin{equation}\label{cond_prop}
\tsum_{k\in\mathcal{N}}\gamma_k[\eta_k-g(x^*)]+\tsum_{k\in \mathcal{B}}\gamma_k\langle f'(x_k),x_k-x^*\rangle 
\leq  \tfrac{1}{\rho}\left(D_X^2+\tfrac{M^2}{2}\tsum_{k=s}^N\gamma_k^2\right).
\end{equation}
Moreover,  suppose that \eqref{cond_prop} holds. Then, $\mathcal{B}\neq \emptyset$, i.e., $\bar{x}_{N,s}$ is well-defined, and we have one of the following two statements holds,
\begin{description}
  \item[a)] $|\mathcal{B}| \geq (N-s+1)/2,$
  \item[b)] $
  \tsum_{k\in \B}\gamma_k\langle f'(x_k),x_k-x^*\rangle \leq 0.$
\end{description}
\end{lem}
\begin{proof}
Taking expectation w.r.t. $\xi_k$ on both sides of \eqref{2.1} and fixing $x = x^*$, we have 
\begin{align*}
& \bbe[\tsum_{k\in \N}\gamma_k(\eta_k - g(x^*))+\tsum_{k\in \B}\gamma_k\langle F'(x_k,\xi_k),x_k-x^*\rangle] \\
&=  \tsum_{k=s}^N \bbe_{\xi_{[k-1]}} \left\{ \bbe_{\xi_k}\left[ \gamma_k[\eta_k-g(x^*)](1-I_{\mathcal{B}}(k))+\gamma_k\langle F'(x_k,\xi_k),x_k-x^*\rangle I_{\mathcal{B}}(k) | \xi_{[k-1]} \right] \right\} \\
& \leq  V(x_s,x^*)+\tfrac{M^2}{2}\tsum_{k=s}^N\gamma_k^2,
\end{align*}
where $I_{\mathcal{B}}(k) = 1$ if $k\in\mathcal{B}$, otherwise $I_{\mathcal{B}}(k) = 0$. Since $I_{\mathcal{B}}(k)$ is independent of $\xi_k$,
\begin{equation}\label{2.2proof}
\begin{aligned}
&   \bbe\left[ \tsum_{k\in\mathcal{N}}\gamma_k[\eta_k-g(x^*)]+\tsum_{k\in \mathcal{B}}\gamma_k\langle f'(x_k),x_k-x^*\rangle \right]   \leq  D_X^2+\tfrac{M^2}{2}\tsum_{k=s}^N\gamma_k^2.
\end{aligned}
\end{equation}
Now by \eqref{2.2proof} and the Markov inequality, we have, for any $\rho\in (0,1)$,
$$
\Prob\left\{\tsum_{k\in\mathcal{N}}\gamma_k[\eta_k-g(x^*)]+\tsum_{k\in \mathcal{B}}\gamma_k\langle f'(x_k),x_k-x^*\rangle 
\geq  \tfrac{1}{\rho}\left(D_X^2+\tfrac{M^2}{2}\tsum_{k=s}^N\gamma_k^2\right)\right\} \leq \rho.
$$
Hence, with probability at least $1-\rho$, we have  \eqref{cond_prop}.
Now assume that \eqref{cond_prop} holds. Suppose for contradiction that $\mathcal{B}= \emptyset$. We then conclude from \eqref{cond_prop}
and the fact $g(x^*)\leq 0$ that
$$
(N-s+1)\min_{k\in \N}\gamma_k\eta_k\leq  \tsum_{k=s}^N\gamma_k[\eta_k-g(x^*)] \leq  \tfrac{1}{\rho}\left(D_X^2+\tfrac{M^2}{2}\tsum_{k=s}^N\gamma_k^2\right),
$$
which contradicts with \eqref{2.2}. Hence, we must have $\mathcal{B}\neq \emptyset$. If $\tsum_{k\in \B}\gamma_k\langle f'(x_k),x_k-x^*\rangle \leq 0$, part b) holds. Otherwise,
if $\tsum_{k\in \B}\gamma_k\langle f'(x_k),x_k-x^*\rangle \geq 0$, we have
$$ \tsum_{k\in \mathcal{N}}\gamma_k[\eta_k-g(x^*)]\leq  \tfrac{1}{\rho}(D_X^2+\tfrac{M^2}{2}\tsum_{k=s}^N\gamma_k^2),$$
which, in view of $g(x^*)\leq 0$, implies that
\begin{equation}\label{2.4}
 \tsum_{k\in \mathcal{N}}\gamma_k\eta_k\leq  \tfrac{1}{\rho}(D_X^2+\tfrac{M^2}{2}\tsum_{k=s}^N\gamma_k^2).
\end{equation}
Suppose that $|\mathcal{B}| < (N-s+1)/2$, i.e., $| \mathcal{N}| \geq (N-s+1)/2$. Then,
$$\tsum_{k\in \mathcal{N}}\gamma_k\eta_k \geq \tfrac{N-s+1}{2}\min_{k\in \N}\gamma_k\eta_k > \tfrac{1}{\rho}(D_X^2+ \tfrac{M^2}{2}\tsum_{k=s}^N\gamma_k^2),$$
which contradicts with \eqref{2.4}. Hence, part a) holds.
\end{proof}

Now we are ready to establish the main convergence properties of the CSA method. 
\begin{thm}\label{Theorem 1}
For some $\rho\in (0,1)$, suppose that $\{\gamma_k\}$ and $\{\eta_k\}$ in the CSA algorithm are chosen such that \eqref{2.2} holds. Then with probability 
at least $1-\rho$, for any $1\leq s\leq N$, we have
\begin{align}
 & f(\bar{x}_{N,s})-f(x^*) \leq \tfrac{2D_X^2+ M^2\tsum_{s\leq k\leq N}\gamma_k^2}
 {\rho(N-s+1)\min_{s\leq k\leq N}\gamma_k}, \label{2.3.2}\\
 &g(\bar{x}_{N,s})  \leq (\tsum_{k\in \B}\gamma_k)^{-1}(\tsum_{k\in \B}\gamma_k\eta_k),\label{2.3.1}
 \end{align}
 where $M := \max\{M_F,M_G\}$.
\end{thm}

\begin{proof}
We first show \eqref{2.3.2}. By Lemma \ref{Lemma 2}, we have \eqref{cond_prop} holds with probability at least $1-\rho$. 
Now suppose \eqref{cond_prop} holds. If Lemma \ref{Lemma 2} part (b) holds, dividing both sides by $\tsum_{k\in\mathcal{B}}\gamma_k$, using the convexity of $f$ and Jensen's inequality, we have
\begin{equation}\label{f_bound1}
f(\bar{x}_{N,s})-f(x^*)\leq 0.
\end{equation}
If $|\B|\geq (N-s+1)/2$, we have
$\tsum_{k\in \mathcal{B}}\gamma_k\geq |\B|\min_{k\in \B}\gamma_k \geq \tfrac{N-s+1}{2}\min_{k\in \B}\gamma_k.
$
It follows from the fact $\gamma_k\eta_k \geq 0, g(x^*)\leq 0$ and \eqref{cond_prop} that
$$
\tsum_{k\in \mathcal{B}}\gamma_k( f(x_k)- f(x^*)) 
\leq  \tfrac{1}{\rho}\left(D_X^2+\tfrac{M^2}{2}\tsum_{k=s}^N\gamma_k^2\right)
$$
which, in view of the definition of $\bar{x}_{N,s}$ in \eqref{Solution5.1} and the convexity of $f(\cdot)$, implies that
\begin{equation}\label{f_bound2}
\begin{aligned}
f(\bar{x}_{N,s})-f(x^*) & \leq \tfrac{2D_X^2+M^2\tsum_{s\leq k\leq N}\gamma_k^2}{ \rho(N-s+1)\min_{k\in \mathcal{B}}\gamma_k}.
\end{aligned}
\end{equation}
Combining these two inequalities \eqref{f_bound1} and \eqref{f_bound2}, we have \eqref{2.3.2}.
Now we show that \eqref{2.3.1} holds. For any $k\in \mathcal{B}$, we have $g(x_k)\leq \eta_k$. This observation, in view of the definition of $\bar{x}_{N,s}$ in \eqref{Solution5.1}, the convexity of $g(\cdot)$ and Jensen's inequality, then implies that
\beq \label{constraint_proof}
\begin{aligned}
   g(\bar{x}_{N,s})  &\leq  \tfrac{\tsum_{k\in \mathcal{B}}\gamma_kg(x_k)}{\tsum_{k\in \mathcal{B}}\gamma_k}
     \leq \tfrac{\tsum_{k\in \mathcal{B}}\gamma_k\eta_k}{\tsum_{k\in \mathcal{B}}\gamma_k}.
\end{aligned}
\eeq
\end{proof}
Below we provide a few specific selections of $\{\gamma_k\}$, $\{\eta_k\}$ and $s$ to establish the rate of convergence for the CSA method. In particular, we will present a constant and variable stepsize policy, respectively, in Corollaries~\ref{Coro 1} and \ref{Coro 2}.
\begin{cor}\label{Coro 1}
If s=1,$\gamma_k=\tfrac{D_X}{\sqrt{N}M}$ and $\eta_k=\tfrac{4M D_X}{ \rho\sqrt{N}}$, $k=1,...N$,
then, with probability at least $1-\rho$, we have
\begin{align*}
f(\bar{x}_{N,s})-f(x^*) &\leq \tfrac{4D_XM}{ \rho\sqrt{N}},\\
g(\bar{x}_{N,s})  &\leq\tfrac{4D_XM}{ \rho\sqrt{N}}.
\end{align*}
\end{cor}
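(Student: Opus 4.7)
The plan is to derive Corollary \ref{Coro 1} as a direct specialization of Theorem \ref{Theorem 1}. Since Theorem \ref{Theorem 1} requires the feasibility condition \eqref{2.2} to hold in order for $\bar{x}_{N,s}$ to be well-defined and for the bounds \eqref{2.3.2} and \eqref{2.3.1} to apply, the first (and essentially only) nontrivial step is to verify \eqref{2.2} for the proposed constant choices $\gamma_k = D_X/[\sqrt{N}(M_F+M_G)]$ and $\eta_k = 4(M_F+M_G)D_X/\sqrt{N}$ with $s=1$.

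First I would compute the left-hand side of \eqref{2.2}. Since both sequences are constant, $(N/2)\min_k \gamma_k\eta_k = (N/2)\gamma_1\eta_1 = (N/2)\cdot \tfrac{D_X}{\sqrt{N}(M_F+M_G)}\cdot \tfrac{4(M_F+M_G)D_X}{\sqrt{N}} = 2D_X^2$. Next I would bound the right-hand side: using the trivial inequality $\max\{M_F^2,M_G^2\} \le (M_F+M_G)^2$ (in fact even $M_F^2+M_G^2\le (M_F+M_G)^2$), the quadratic term satisfies
\[
\tfrac{1}{2}\tsum_{k\in\B}\gamma_k^2 M_F^2 + \tfrac{1}{2}\tsum_{k\in\N}\gamma_k^2 M_G^2 \;\le\; \tfrac{1}{2} N\gamma_1^2 (M_F+M_G)^2 \;=\; \tfrac{D_X^2}{2},
\]
so the right-hand side of \eqref{2.2} is bounded by $D_X^2+D_X^2/2 = 3D_X^2/2 < 2D_X^2$, which confirms \eqref{2.2}.

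Having verified the hypothesis, I would then substitute the parameter choices into \eqref{2.3.2}. The numerator becomes $2D_X^2 + \max\{M_F^2,M_G^2\}\cdot N\gamma_1^2 \le 2D_X^2 + D_X^2 = 3D_X^2$, while the denominator is $N\gamma_1 = D_X\sqrt{N}/(M_F+M_G)$. The ratio is $3D_X(M_F+M_G)/\sqrt{N}$, which is trivially bounded by the claimed $4D_X(M_F+M_G)/\sqrt{N}$. For the constraint violation, the bound \eqref{2.3.1} gives $\bbe[g(\bar x_{N,1})] \le \max_k \eta_k = 4(M_F+M_G)D_X/\sqrt{N}$ directly by the definition of $\eta_k$.

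There is no real obstacle here beyond bookkeeping; the only subtlety is making sure the constant stepsize $\gamma_k$ is small enough relative to $\eta_k$ to satisfy \eqref{2.2}, which is exactly why $\eta_k$ is chosen to contain the factor $4$. Everything else is algebraic substitution into bounds already proved.
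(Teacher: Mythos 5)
Your proposal is correct and follows essentially the same route as the paper: verify condition \eqref{2.2} by computing $\tfrac{N}{2}\gamma_1\eta_1 = 2D_X^2$ and bounding the right-hand side by $\tfrac{3}{2}D_X^2$, then substitute the constant parameters into \eqref{2.3.2} and \eqref{2.3.1}. The arithmetic (numerator at most $3D_X^2$, denominator $\sqrt{N}D_X/(M_F+M_G)$, hence a bound of $3D_X(M_F+M_G)/\sqrt{N} \le 4D_X(M_F+M_G)/\sqrt{N}$) matches the paper's calculation.
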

\begin{proof}
First, observe that condition \eqref{2.2} holds by using the facts that
$$
\begin{aligned}
&\tfrac{N-s+1}{2}\min_{k\in\N}\gamma_k\eta_k = \tfrac{N}{2}\tfrac{4D_X^2}{\rho N} = \tfrac{2D_X^2}{ \rho}, \\
&D_X^2+\tfrac{M^2}{2}\tsum_{k=s}^N\gamma_k^2 = D_X^2+\tfrac{1}{2}\tsum_{k=1}^N \tfrac{D_X^2}{N} \leq 2D_X^2.
\end{aligned}$$
It then follows from Lemma \ref{Lemma 2} and Theorem \ref{Theorem 1} that
\begin{align*}
& f(\bar{x}_{N,s})-f(x^*) \leq\tfrac{4D_XM}{  \rho\sqrt{N}},\\
&g(\bar{x}_{N,s}) \leq \max_{s\leq k\leq N} \eta_k =  \tfrac{4D_XM}{ \rho\sqrt{N}}.
\end{align*}
\end{proof}

\begin{cor}\label{Coro 2}
If $s=\tfrac{N}{2}$,
$\gamma_k=\tfrac{D_X}{\sqrt{k}M}$ and $\eta_k=\tfrac{4D_XM}{ \rho\sqrt{k}},$ $k=1,2,...,N$, then with probability at least $1-\rho$, we have
\begin{align*}
 f(\bar{x}_{N,s})-f(x^*) &\leq \tfrac{4D_X(1+\tfrac{1}{2}\log2)M}{ \rho\sqrt{N}},\\
g(\bar{x}_{N,s})  &\leq \tfrac{4\sqrt{2} D_XM}{  \rho \sqrt{N}}.
\end{align*}
\end{cor}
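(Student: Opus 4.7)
The plan is to verify the premise \eqref{2.2} of Theorem \ref{Theorem 1} under this variable stepsize policy and then apply the two bounds of Theorem \ref{Theorem 1} directly, following the template of the proof of Corollary \ref{Coro 1}. The arithmetic will differ primarily through the appearance of the partial harmonic sum $\tsum_{k=N/2}^{N} 1/k$ in place of a constant.

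First, I would verify \eqref{2.2}. Since $\gamma_k \eta_k = 4D_X^2/k$ and every $k \in \N \subseteq \{s,\ldots,N\}$ satisfies $k \le N$, the left-hand side of \eqref{2.2} is at least $\tfrac{N-s+1}{2}\cdot \tfrac{4D_X^2}{N} = D_X^2(1+2/N)$. For the right-hand side, I would combine the two sums using $\max(M_F^2, M_G^2) \le (M_F+M_G)^2$, yielding $\tsum_{k\in\B}\gamma_k^2 M_F^2+\tsum_{k\in\N}\gamma_k^2 M_G^2 \le D_X^2 \tsum_{k=N/2}^N 1/k$, then apply the standard integral comparison $\tsum_{k=N/2+1}^{N} 1/k \le \int_{N/2}^N dx/x = \log 2$ to control the harmonic tail.

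Second, I would invoke Theorem \ref{Theorem 1}. The constraint-violation bound \eqref{2.3.1} is immediate: since $\eta_k = 4D_X(M_F+M_G)/\sqrt{k}$ is decreasing in $k$, its maximum over $\{s,\ldots,N\}$ is attained at $k=s=N/2$, giving $\eta_{N/2} = 4\sqrt{2}D_X(M_F+M_G)/\sqrt{N}$. For \eqref{2.3.2}, I would substitute $\min_k\gamma_k = \gamma_N = D_X/(\sqrt{N}(M_F+M_G))$, bound the numerator by $2D_X^2 + D_X^2(\log 2 + 2/N)$ via the harmonic-sum estimate, and use $(N-s+1)\min_k\gamma_k \ge \sqrt{N}D_X/(2(M_F+M_G))$ (since $N/2+1 \ge N/2$). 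Simplifying the ratio yields $2(2+\log 2)D_X(M_F+M_G)/\sqrt{N} = 4D_X(1+\tfrac{1}{2}\log 2)(M_F+M_G)/\sqrt{N}$, modulo lower-order $O(N^{-3/2})$ corrections.

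The main obstacle is keeping the harmonic-sum estimate tight: a naive counting bound $\tsum_{k=N/2}^N 1/k \le (N/2+1)\cdot(1/(N/2)) = O(1)$ would inflate the constant from $\tfrac{1}{2}\log 2 \approx 0.35$ up to order $1$, so the logarithmic integral comparison is essential to reproduce the $(1+\tfrac{1}{2}\log 2)$ factor in the stated bound. A minor technical caveat is that the slack in \eqref{2.2} is only of order $1/N$ for this parameter choice, so strict inequality holds only up to additive corrections that are absorbed into the lower-order $O(N^{-3/2})$ terms of the final bound.
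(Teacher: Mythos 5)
Your overall strategy (verify \eqref{2.2}, then apply Theorem \ref{Theorem 1}) is the intended one — the paper itself skips the details and says the proof is analogous to Corollary \ref{Coro 1} — but the crucial first step does not actually go through with the stated constants, and your write-up never carries out the comparison that would reveal this. By your own (correct) estimates, the left-hand side of \eqref{2.2} is
$\tfrac{N-s+1}{2}\min_{k\in \N}\gamma_k\eta_k \ge \tfrac{N+2}{4}\cdot\tfrac{4D_X^2}{N} = D_X^2\left(1+\tfrac{2}{N}\right)$,
while the right-hand side is bounded only by
$D_X^2 + \tfrac{D_X^2}{2}\tsum_{k=N/2}^{N}\tfrac{1}{k} \approx D_X^2\left(1+\tfrac{1}{2}\log 2+\tfrac{1}{N}\right) \approx 1.35\,D_X^2$,
and this upper bound is essentially tight (each of $M_F^2$ and $M_G^2$ can equal $(M_F+M_G)^2/4$ or more in general, and the harmonic tail genuinely contributes $\log 2$). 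So the required strict inequality LHS $>$ RHS fails for every $N\ge 3$ by a constant margin of roughly $\tfrac{1}{2}(\log 2)D_X^2$; your closing remark that "the slack in \eqref{2.2} is only of order $1/N$" has the sign wrong — there is no slack at all. Since \eqref{2.2} is the hypothesis of both Lemma \ref{Lemma 2} and Theorem \ref{Theorem 1}, neither the guarantee $|\B|\ge (N-s+1)/2$ nor the bounds \eqref{2.3.2}--\eqref{2.3.1} can be invoked, and the rest of your argument has no foundation.

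The defect is inherited from the statement itself (the constant $4$ in $\eta_k$ is too small once $s=N/2$ halves the number of terms on the left of \eqref{2.2} while the $D_X^2$ term on the right is not halved), but a proof must either repair it or flag it. The fix is to enlarge the constant in $\eta_k$ to any $c > 4+2\log 2 \approx 5.39$ (e.g.\ $\eta_k = 6D_X(M_F+M_G)/\sqrt{k}$), which makes $\tfrac{N+2}{4}\cdot\tfrac{cD_X^2}{N} > D_X^2(1+\tfrac{1}{2}\log 2 + \tfrac{1}{N})$ for all $N$, at the cost of changing the constant in the constraint-violation bound to $c\sqrt{2}$. The remainder of your argument — the monotonicity of $\eta_k$ giving $\max_{s\le k\le N}\eta_k = \eta_{N/2}$, the integral comparison $\tsum_{k=N/2+1}^{N}1/k\le \log 2$, and the algebra $2(2+\log 2)=4(1+\tfrac12\log 2)$ — is correct and would complete the proof once \eqref{2.2} is actually satisfied, modulo the lower-order $O(N^{-3/2})$ term you already note (which is also present in the paper's stated bound).
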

\begin{proof}
The proof is similar to that of corollary 4 and hence the details are skipped.
\end{proof}

\vgap

In view of Corollaries~\ref{Coro 1} and \ref{Coro 2}, the CSA algorithm achieves an ${\cal O}(1/\sqrt{N})$ rate of
convergence for solving problem~\eqref{1.1}. This convergence rate seems to be unimprovable as it matches the
optimal rate of convergence for deterministic convex optimization problems with function constraints~\cite{Nest04}.
However,
to the best of our knowledge, no such complexity bounds have been obtained before for solving
stochastic optimization problems with function constraints.

%
It follows from  Corollary~\ref{Coro 1} and \ref{Coro 2} that in order to find a solution $\bar{x}_{N,s}\in \setU$ such that
$$\Prob \left\{f(\bar{x}_{N,s})-f(x^*)\leq \epsilon, g(\bar{x}_{N,s})\leq \epsilon\right\}\ge 1-\Lambda,$$
the number of iteration performed by the CSA method can be bounded by
\begin{equation}\label{Com1}
{\cal O}\left\{\tfrac{1}{\epsilon^2\Lambda^2}\right\}.
\end{equation}
We will show that this result can be significantly improved if Assumption A1 is augmented by the following ``light-tail" assumption,
which is satisfied by a wide class of distributions (e.g., Gaussian and t-distribution).
\begin{assumption}\label{light_a}
For and $x \in X$,
\begin{align*}
\bbe[\exp\{\| F'(x,\xi)\|_*^2/M_F^2\}] &\leq \exp\{1\}.
\end{align*}
\end{assumption}

We first present the following Bernstein inequality that will be used to establish the large-deviation properties of the CSA method (e.g. see \cite{nemirovski2009robust}).
Note that in the sequel, we  denote $\xi_{[k]} := \{\xi_1,\ldots,\xi_k\}.$
\begin{lem}\label{Lemma 5}
Let $\xi_1,\xi_2,...$ be a sequence of i.i.d. random variables, and $\xi_t=\xi(\xi_{[t]})$ be deterministic Borel functions of $\xi_{[t]}$ such that $\bbe[\xi_t]=0$ a.s. and $\bbe[\exp\{\xi_t^2/\sigma_t^2\}]\leq \exp\{1\}$ a.s., where $\sigma_t> 0$ are deterministic. Then
$$\forall \lambda\geq 0: \Prob\left\{\tsum_{t=1}^N\xi_t>\lambda\sqrt{\tsum_{t=1}^N\sigma_t^2}\right\}\leq \exp\{-\lambda^2/3\}.$$
\end{lem}

Now we are ready to establish the large deviation properties of the CSA algorithm.
\begin{thm}\label{Theorem 3}
Under Assumption 2, $\forall \lambda \geq 0$,
\begin{align}
    \Prob\{f(\bar{x}_{N,s})-f(x^*)\geq K_0 +\lambda K_1\}\leq \exp\{-\lambda\}+\exp\{-\tfrac{\lambda^2}{3}\},\label{2.6}
\end{align}
where $K_0=\tfrac{\tfrac{1}{2}D_X^2+M_F^2\tsum_{k\in \B}\gamma_k^2+M_G^2\tsum_{k\in \N}\gamma_k^2}{\tsum_{k\in \B}\gamma_k}$ and \\ $K_1=\tfrac{M_F^2\tsum_{k\in \B}\gamma_k^2+M_G^2\tsum_{k\in \N}\gamma_k^2+M_FD_X\sqrt{\tsum_{k=s}^N\gamma_k^2}}{\tsum_{k\in \B}\gamma_k}$.
\end{thm}
\begin{proof}
Let $F'(x_k,\xi_k)=f'(x_k)+\Delta_k$.
It follows from the inequality \eqref{2.1} (with $x=x^*$) and the fact $g(x^*)\leq 0$ that
\begin{multline}\label{ProbabI}
    \tsum_{k\in \N}\gamma_k\eta_k+(\tsum_{k\in \B}\gamma_k)(f(\bar{x}_{N,s})-f(x^*))\leq D_X^2+\tsum_{k\in \B}\gamma_k^2\| F'(x_k,\xi_k)\|_*^2 \\
    +\tsum_{k\in \N}\gamma_k^2\| g'(x_k)\|_*^2 -\tsum_{k\in \B}\gamma_k\langle\Delta_k,x_k-x^*\rangle.
\end{multline}
Now we provide probabilistic bounds for $\tsum_{k\in \B}\gamma_k^2\| F'(x_k,\xi_k)\|_*^2$ and $\tsum_{k\in \B}\gamma_k\langle\Delta_k,x_k-x^*\rangle$. First, setting $\theta_k=\gamma_k^2/\tsum_{k\in \B}\gamma_k^2$, using the fact that $\bbe[\exp\{\| F'(x_k,\xi_k)\|_*^2/M_F^2\}]\leq \exp\{1\}$ and Jensens inequality, we have
$$\exp\{\tsum_{k\in\B}\theta_k(\| F'(x_k,\xi_k)\|_*^2/M_F^2)\}\leq \tsum_{k\in \B}\theta_k\exp\{\| F'(x_k,\xi_k)\|_*^2/M_F^2\},
$$ and hence that
$$\bbe[\exp\{\tsum_{k\in \B}\gamma_k^2\| F'(x_k,\xi_k)\|_*^2/M_F^2\tsum_{k\in\B}\gamma_k^2\}]\leq \exp\{1\}.$$
It then follows from Markov's inequality that $\forall \lambda\geq 0$,
\begin{equation}\label{part1}
\begin{aligned}
   &\Prob(\tsum_{k\in \B}\gamma_k^2\| F'(x_k,\xi_k)\|_*^2> (1+\lambda)M_F^2\tsum_{k\in\B}\gamma_k^2) \\
   &=\Prob\left(\exp\left\{\tfrac{\tsum_{k\in \B}\gamma_k^2\| F'(x_k,\xi_k)\|_*^2}{M_F^2\tsum_{k\in\B}\gamma_k^2}\right\}> \exp(1+\lambda)\right) \\
   &\leq \tfrac{\exp\{1\}}{\exp\{1+\lambda\}} \leq \exp\{-\lambda\}.
\end{aligned}
\end{equation}

Then, let us consider $\tsum_{k\in \B}\gamma_k\langle\Delta_k,x_k-x^*\rangle$. Setting $\beta_k=\gamma_k\langle\Delta_k,x_k-x^*\rangle$, we have 
$\tsum_{k\in \B}\beta_k = \tsum_{k=s}^N \beta_kI_\B(k)$, and $\bbe[\beta_k I_\B(k) | \xi_{[k-1]}] = 0$.
Here $I_{\mathcal{B}}(k) = 1$ if $k\in\mathcal{B}$, otherwise $I_{\mathcal{B}}(k) = 0$.
Also noting that $\mathbb{E}[\|\Delta_k\|_*^2]\leq (2M_F)^2$, we have
$$\bbe[\exp\{\beta_k^2/(2M_F\gamma_k D_X)^2\}]\leq \exp\{1\},$$
which, in view of Lemma \ref{Lemma 5}, implies that
\begin{equation}\label{part3}
    \Prob\left\{\tsum_{k\in \B}\beta_k>2\lambda M_FD_X\sqrt{\tsum_{k=s}^N\gamma_k^2}\right\}\leq \exp\{-\lambda^2/3\}.
\end{equation}
Combining \eqref{part1} and \eqref{part3}, and rearranging the terms we get \eqref{2.6}.
\end{proof}
Applying the stepsize strategy in Corollary 5 to Theorem~{\color{black}\ref{Theorem 3}}, then it follows that the number of iterations performed by the CSA method can be bounded by
$${\cal O}\left\{\tfrac{1}{\epsilon^2}(\log\tfrac{1}{\Lambda})^2\right\}.$$
We can see that the above result significantly improves the one in \eqref{Com1}.

\subsection{{\color{black} Convergence of CSA for SP with expectation constraints}} \label{sec_expconstr}
In this subsection, we focus on the SP problem \eqref{1.1}-\eqref{1.1exp} with the expectation constraint.
We assume the expectation functions $f(x)$ and $g(x)$, in addition to being well-defined and finite-valued for every $x\in X$, are continuous and convex on $X$.
Throughout this section, we assume the Assumption~\ref{light_a} holds. Moreover, with a little abuse of notation, we make the following assumption.
\begin{assumption}\label{light_a1}
for any $x \in X$,
\begin{align}
\bbe[\exp\{\| G'(x,\xi)\|_*^2/M_G^2\}] &\leq \exp\{1\}, \label{light_a1_1}\\
\bbe[\exp\{(G(x,\xi)-g(x))^2/\sigma^2\}] &\leq \exp\{1\}. \label{light_a1_2}
\end{align}
\end{assumption}
{\color{black}We will use \eqref{light_a1_1} and \eqref{light_a1_2} to bound the error associated with stochastic subgradient
and function value for the constraint $g$, respectively. As discussed  in subsection 2.2, there may exist different ways to simulate the random variable $\xi$
for constraint evaluation, e.g.,  by generating a J-sized i.i.d. random sample of $\xi$ or its linear transformation $\bar \xi = \xi^T x$. However,
regardless of the way to simulate the random variable $\xi$,
the light-tail assumption \eqref{light_a1_2} holds for the constraint value $G(x,\xi)$.
Our goal in this subsection is to show how the sample size (or iteration count) $N$ to compute stochastic subgradients,
 as well as the sample size $J$ to evaluate the constraint value, will affect the quality of the solutions generated by CSA.
}

The following result establishes a simple but important recursion about the CSA method for stochastic optimization with expectation constraints.
\begin{prop}\label{Proposition 5.1}
For any $1\leq s\leq N$, we have
\begin{equation}\label{5.2}
\begin{array}{l}
    \tsum_{k\in \N}\gamma_k(G(x_k,\xi_k) - G(x,\xi_k))+\tsum_{k\in \B}\gamma_k\langle F'(x_k,\xi_k),x_k-x\rangle \\
    \quad \leq V(x_s,x)+\tfrac{1}{2}\tsum_{k\in\B}\gamma_k^2\| F'(x_k,\xi_k)\|_*^2+\tfrac{1}{2}\tsum_{k\in\N}\gamma_k^2\| G'(x_k,\xi_k)\|_*^2, \, \forall x \in X.
    \end{array}
\end{equation}
\end{prop}
\begin{proof}
For any $s\leq k\leq N$, using Lemma \ref{Lemma 1}, we have
\begin{equation}\label{Vrecursion5}
     V(x_{k+1},x)\leq V( x_k,x)+\gamma_k\langle h_k,x-x_k\rangle+\tfrac{1}{2}\gamma_k^2\| h_k\|_*^2.
\end{equation}
Observe that if $k\in \B$, we have $h_k=F'(x_k,\xi_k),$ and
$$\langle h_k,x_k-x\rangle=\langle F'(x_k,\xi_k),x_k-x\rangle.$$
Moreover, if $k\in \mathcal{N}$, we have $h_k=G'(x_k,\xi_k)$ and
$$
\begin{aligned}
 \langle h_k,x_k-x\rangle& = \langle G'(x_k,\xi_k),x_k-x\rangle
  \geq G(x_k,\xi_k) - G(x,\xi_k).
\end{aligned}
$$
Summing up the inequalities in \eqref{Vrecursion5} from $k=s$ to $N$ and using the previous two observations, we obtain
\begin{equation}
\begin{aligned}
    V(x_{k+1},x) &\leq V(x_s,x)-\tsum_{k=s}^N\gamma_k\langle h_k,x_k-x\rangle+\tfrac{1}{2}\tsum_{k=s}^N\gamma_k^2\| h_k\|_*^2 \\
 &\leq  V(x_s,x)-\left[\tsum_{k\in \N}\gamma_k\langle G'(x_k,\xi_k), x_k-x\rangle+\tsum_{k\in \B}\gamma_k\langle F'(x_k,\xi_k),x_k-x\rangle\right]+\tfrac{1}{2}\tsum_{k=s}^N\gamma_k^2\| h_k\|_*^2 \\
    &= V(x_s,x)-\left[\tsum_{k\in\N}\gamma_k(G(x_k,\xi_k) - G(x,\xi_k))+\tsum_{k\in \B}\gamma_k\langle F'(x_k,\xi_k),x_k-x\rangle\right]\\
    & \quad +\tfrac{1}{2}\tsum_{k\in\B}\gamma_k^2\| F'(x_k,\xi_k)\|_*^2 +\tfrac{1}{2}\tsum_{k\in\N}\gamma_k^2\| G'(x_k,\xi_k)\|_*^2.
\end{aligned}
\end{equation}
Rearranging the terms in above inequality, we obtain \eqref{5.2}.
\end{proof}

\vgap

Using Proposition \ref{Proposition 5.1}, we present below a sufficient condition under which the output solution $\bar{x}_{N,s}$ is well-defined.
\begin{lem}\label{Lemma 5.2}
Let $x^*$ be an optimal solution of \eqref{1.1}-\eqref{1.1exp}. Under Assumption~\ref{light_a1}, for any given $\lambda >0$, we have
\begin{equation}\label{cond_prop2}
\Prob\{\tsum_{k\in\mathcal{N}}\gamma_k\eta_k+\tsum_{k\in \mathcal{B}}\gamma_k\langle f'(x_k),x_k-x^*\rangle \leq K_0+\lambda K_1\} \geq 1-2\exp\{-\lambda\} - (N-s+2)\exp\{-\lambda^2/3\},
\end{equation}
where $K_0 = D_X^2 + (M_G^2+M_F^2)\tsum_{k=s}^N\gamma_k^2$, $K_1 =  (M_G^2+M_F^2)\tsum_{k=s}^N\gamma_k^2+2(M_G+M_F)D_X\sqrt{\tsum_{k=s}^N\gamma_k^2}+\tfrac{\sigma}{\sqrt{J}}\tsum_{k=s}^N\gamma_k$, and  $J$ is the number of random samples to estimate $g(x_k)$ in each iteration.
If, for some $\lambda>0$,
\begin{equation}\label{5.3}
    \tfrac{N-s+1}{2}\min_{k\in \N}\gamma_k\eta_k >  K_0+\lambda K_1,
\end{equation}
then with probability at least $1-2\exp\{-\lambda\} - (N-s+2)\exp\{-\lambda^2/3\}$, we have one of the following two statements holds,
\begin{description}
  \item[a)]  $|\mathcal{B}| \geq (N-s+1)/2,$
  \item[b)] $
  \tsum_{k\in \B}\gamma_k\langle f'(x_k),x_k-x^*\rangle \leq 0.$
\end{description}
\end{lem}
\begin{proof}
From \eqref{5.2}, 
fixing $x = x^*$, we have
\begin{align*}
& \tsum_{k\in\mathcal{N}}\gamma_k[g(x_k)-g(x^*)]+\tsum_{k\in \mathcal{B}}\gamma_k\langle f'(x_k),x_k-x^*\rangle \leq  D_X^2 + \tfrac{1}{2}\tsum_{k\in\B}\gamma_k^2\|F'(x_k,\xi_k)\|^2 + \tfrac{1}{2}\tsum_{k\in\N}\gamma_k^2\|G'(x_k,\xi_k)\|^2 \\
& \quad \quad+\tsum_{k\in \N}\gamma_k\langle g'(x_k)- G'(x_k,\xi_k),x_k-x\rangle +\tsum_{k\in \B}\gamma_k\langle f'(x_k) - F'(x_k,\xi_k),x_k-x\rangle \\
&\text{then using the facts } \hat G_k \geq \eta_k \text{ for } k\in \N, \text{ and } g(x^*) \leq 0, \\
&\tsum_{k\in\mathcal{N}}\gamma_k\eta_k+\tsum_{k\in \mathcal{B}}\gamma_k\langle f'(x_k),x_k-x^*\rangle \leq  D_X^2 + \tfrac{1}{2}\tsum_{k\in\B}\gamma_k^2\|F'(x_k,\xi_k)\|^2 + \tfrac{1}{2}\tsum_{k\in\N}\gamma_k^2\|G'(x_k,\xi_k)\|^2 \\
& \quad \quad+\tsum_{k\in \N}\gamma_k\langle g'(x_k)- G'(x_k,\xi_k),x_k-x\rangle +\tsum_{k\in \B}\gamma_k\langle f'(x_k) - F'(x_k,\xi_k),x_k-x\rangle +\tsum_{k\in \N}\gamma_k[\hat G_k - g(x_k)].
\end{align*}
Let $\theta_k = \gamma_k^2/\tsum_{k=s}^N \gamma_k^2$, from Jensens inequality, we have
$$\exp\{\tsum_{k=s}^N \theta_k(\|F'(x_k,\xi_k)\|^2/M_F^2)\}\leq \tsum_{k=s}^N \theta_k\exp\{\|F'(x_k,\xi_k)\|^2/M_F^2\},$$
and hence combining Assumption~\ref{light_a} that
$$\bbe[\exp\{\tsum_{k=s}^N\gamma_k^2\|F'(x_k,\xi_k)\|^2/M_F^2\tsum_{k=s}^N\gamma_k^2\}]\leq \exp\{1\}.$$
It then follows from Markov's inequality that $\forall \lambda\geq 0$,
\begin{equation}\label{L10_1}
\begin{aligned}
   &\Prob(\tsum_{k\in \B}\gamma_k^2\| F'(x_k,\xi_k)\|_*^2> (1+\lambda)M_F^2\tsum_{k=s}^N\gamma_k^2) \\
   &\leq \Prob(\tsum_{k=s}^N\gamma_k^2\| F'(x_k,\xi_k)\|_*^2> (1+\lambda)M_F^2\tsum_{k=s}^N\gamma_k^2) \\
   &=\Prob\left(\exp\left\{\tfrac{\tsum_{k =s}^N\gamma_k^2\| F'(x_k,\xi_k)\|_*^2}{M_F^2\tsum_{k=s}^N\gamma_k^2}\right\}> \exp(1+\lambda)\right) \leq \tfrac{\exp\{1\}}{\exp\{1+\lambda\}} \leq \exp\{-\lambda\}.
\end{aligned}
\end{equation}
Similarly, we have
\begin{equation}\label{L10_2}
\Prob(\tsum_{k\in \N}\gamma_k^2\| G'(x_k,\xi_k)\|_*^2> (1+\lambda)M_G^2\tsum_{k=s}^N\gamma_k^2)\leq \exp\{-\lambda\}.
\end{equation}
Let $\delta_k = f'(x_k) - F'(x_k,\xi_k)$, we have
\begin{align*}
\bbe[\langle \delta_k,x_k-x\rangle I_\B(k)|\xi_{[k-1]}] = & \bbe[\langle\delta_k,x_k-x\rangle I_{\B}(k)|\xi_{[k-1]},I_\B(k)=1]\Prob\{I_\B(k) = 1\} \\
& + \bbe[\langle\delta_k,x_k-x\rangle I_{\B}(k)|\xi_{[k-1]},I_\B(k)=0]\Prob\{I_\B(k) = 0\} = 0,
\end{align*}
where $I_\B(k) = 1$ if $k\in\B$, and $I_\B(k) = 0$ otherwise.
So $\langle \delta_k,x_k-x\rangle I_\B(k)$ is a martingale difference sequence, and from the Assumption~\ref{light_a1}, we have
$$\bbe[\exp\{(\gamma_k\langle \delta_k,x_k-x\rangle)^2/(2\gamma_k D_X M_F)^2\}]\leq \exp\{1\}.$$
Hence, we have
\begin{equation}\label{L10_3}
    \Prob\{\tsum_{k\in\B}\gamma_k\langle f'(x_k) - F'(x_k,\xi_k),x_k-x\rangle>2\lambda M_FD_X\sqrt{\tsum_{k=s}^N\gamma_k^2}\}\leq \exp\{-\tfrac{\lambda^2}{3}\}.
\end{equation}
Also, we have
\begin{equation}\label{L10_4}
    \Prob\{\tsum_{k\in\N}\gamma_k\langle g'(x_k) - G'(x_k,\xi_k),x_k-x\rangle>2\lambda M_G D_X\sqrt{\tsum_{k=s}^N\gamma_k^2}\}\leq \exp\{-\tfrac{\lambda^2}{3}\}.
\end{equation}
Besides,
\begin{align*}
& \Prob\{\tsum_{k\in\N} \gamma_kg(x_k) < \tsum_{k\in\N} \gamma_k\hat G_k -\tfrac{\lambda\sigma}{\sqrt{J}}\tsum_{k=s}^N\gamma_k\} \\
  \leq & \Prob\{\tsum_{k\in\N} \gamma_kg(x_k) < \tsum_{k\in\N} \gamma_k\hat G_k -\tfrac{\lambda\sigma}{\sqrt{J}}\tsum_{k\in\N}\gamma_k\}\\
  \leq & \Prob\{\exists k\in \N, g(x_k) < \hat G_k -\tfrac{\lambda\sigma}{\sqrt{J}}\}\leq 1-(1-\exp\{-\tfrac{\lambda^2}{3}\})^{|\N|}\leq (N-s)\exp\{-\tfrac{\lambda^2}{3}\},\\
\end{align*}
thus,
\begin{equation}\label{L10_5}
    \Prob\{\tsum_{k\in\N} \gamma_k[\hat G_k- g(x_k)]>  \tfrac{\lambda\sigma}{\sqrt{J}}\tsum_{k=s}^N\gamma_k\} \leq (N-s)\exp\{-\tfrac{\lambda^2}{3}\}
\end{equation}
Combining \eqref{L10_1}, \eqref{L10_2}, \eqref{L10_3}, \eqref{L10_4} and \eqref{L10_5}, we have \eqref{cond_prop2}.
Therefore, with probability at least $1-2\exp\{-\lambda\} - (N-s+2)\exp\{-\lambda^2/3\}$, we have
$$\tsum_{k\in\mathcal{N}}\gamma_k\eta_k+\tsum_{k\in \mathcal{B}}\gamma_k\langle f'(x_k),x_k-x^*\rangle \leq K_0+\lambda K_1.$$
If $\tsum_{k\in \B}\gamma_k\langle f'(x_k),x_k-x^*\rangle \leq 0$, part b) holds.
If $\tsum_{k\in \B}\gamma_k\langle f'(x_k),x_k-x^*\rangle \geq 0$, we have
\begin{equation}\label{5.4}
\tsum_{k\in \mathcal{N}}\gamma_k\eta_k\leq K_0+\lambda K_1.
\end{equation}
Suppose that $|\mathcal{B}| < (N-s+1)/2$, i.e., $| \mathcal{N}| \geq (N-s+1)/2$. Then, the condition in \eqref{5.3} implies that
$$\tsum_{k\in \mathcal{N}}\gamma_k\eta_k\geq \tfrac{N-s+1}{2}\min_{k\in \N}\gamma_k\eta_k >K_0 + \lambda K_0,$$
which yields contradiction. Then, part a) holds.
\end{proof}
%
%
Now we are ready to establish the large deviation properties of the CSA algorithm.
\begin{thm}\label{Theorem 5.1}
 Suppose that Assumptions~\ref{light_a} and~\ref{light_a1} hold. 
\begin{description}
  \item[a)] Suppose the parameters $s, \gamma_k, \eta_k$ are chosen by \eqref{5.3}, then for any given $ \lambda > 0$, we have
\begin{align}
    &\Prob\{f(\bar{x}_{N,s})-f(x^*)\leq \hat K_0 +\lambda\hat K_1\} \geq 1-2\exp\{-\lambda\}-(N-s+2)\exp\{-\tfrac{\lambda^2}{3}\},\label{5.5} \\
    &\Prob\left\{g(\bar{x}_{N,s})\leq \max_{k\in\B}\eta_k+\tfrac{\lambda\sigma}{\sqrt{J}}\right\}\geq 1- (N-s)\exp\{-\lambda^2/3\},\label{5.6}
\end{align}
where $\hat K_0=\left(\tfrac{N-s+1}{2}\min_{k\in \B}\gamma_k\right)^{-1}\left(D_X^2+(M_F^2+M_G^2)\tsum_{k =s}^N\gamma_k^2\right)$ and \\
$\hat K_1=\left(\tfrac{N-s+1}{2}\min_{k\in \B}\gamma_k\right)^{-1}\left((M_F^2+M_G^2)\tsum_{k =s}^N\gamma_k^2 +2(M_F+M_G)D_X\sqrt{\tsum_{k=s}^N\gamma_k^2}+\tfrac{\sigma}{\sqrt{J}}\tsum_{k=s}^N\gamma_k\right)$.
  \item[b)] For any $\Lambda\in (0,1)$ and $\lambda>1$, if we choose $N\leq \tfrac{\Lambda}{2}\exp\{\lambda^2/3\}$ and set
\begin{equation}\label{NJ_1}
\begin{aligned}
&s=1,\ \gamma_k=\tfrac{D_X}{\sqrt{N}M}, \ \eta_k=\tfrac{6MD_X}{\sqrt{N}}+\lambda(\tfrac{8MD_X}{\sqrt{N}}+\tfrac{2\sigma}{\sqrt{J}}),\\
&N = \max\{ \tfrac{24^2M^2D_X^2}{\epsilon^2}(\log\tfrac{4}{\Lambda})^2,  \tfrac{24^2M^2D_X^2}{\vartheta^2}(\log\tfrac{4}{\Lambda})^2\},\\
&J = \max\{ \tfrac{36\sigma^2}{\epsilon^2}(\log\tfrac{4}{\Lambda})^2,\tfrac{81\sigma^2}{\vartheta^2}(\log\tfrac{4}{\Lambda})^2 \},
\end{aligned}
\end{equation}
where $M = \max\{M_F,M_G\}$, then we have
\begin{equation} \label{lem_result_final}
\Prob\{g(\bar{x}_{N,s})\leq \vartheta\} \geq 1 -\Lambda  \text{ and }\  \Prob\{f(\bar{x}_{N,s})-f(x^*)\le \epsilon\}\geq 1-\Lambda.
\end{equation}

\end{description}
\end{thm}
\begin{proof}
Let us first show part a) holds.
From Lemma~\ref{Lemma 5.2}, for any given $\lambda>0$, with probability at least $1-2\exp\{-\lambda\}-(N-s+2)\exp\{-\tfrac{\lambda^2}{3}\} $, we have $\tsum_{k\in \B}\gamma_k\langle f'(x_k),x_k-x^*\rangle\leq K_0+\lambda K_1$, and either part a) or part b) in Lemma~\ref{Lemma 5.2} holds. If part b) holds, then from the convexity of $f$, we have
$f(\bar{x}_{N,s})-f(x^*)\leq 0$. If part a) holds, dividing both sides by $\tsum_{k\in \B}\gamma_k$, using the fact $|\B|\geq (N-s+1)/2$ and the convexity of $f$, we have $f(\bar{x}_{N,s})-f(x^*)\leq \hat K_0+\lambda\hat K_1$. Combining the above relations, we have \eqref{5.5}.
Let us show that \eqref{5.6} holds. Clearly, by the convexity of $g(\cdot)$ and the definition of $\bar{x}_{N,s}$, we have
$$g(\bar{x}_{N,s})=g(\tsum_{k\in \B}\iota_k x_k)\leq \left(\tsum_{k\in \B}\gamma_k\right)^{-1}\tsum_{k\in \B}\gamma_kg(x_k)\leq \max_{k\in\B}g(x_k).$$
Using this observation and the fact that $\Prob\{g(x_k)>\hat G_k + \tfrac{\lambda \sigma}{\sqrt{J}}\}<\exp\{-\tfrac{\lambda^2}{3}\}$, we obtain \eqref{5.6}.


Then, let us show part b) holds. First, observe that condition \eqref{5.3} holds by using the selection of $s$, $\{\gamma_k\}$ and $\{\eta_k\}$.
Thus from part a) and \eqref{NJ_1}, we have \eqref{lem_result_final}.
\end{proof}

In view of Theorem~\ref{Theorem 5.1}, the complexity in terms of the number of iterations $N$ of the CSA algorithm 
can be bounded by ${\cal O}(\max\{\tfrac{1}{\epsilon^2}(\log\tfrac{1}{\Lambda})^2,\tfrac{1}{\vartheta^2}\})$, 
and the sample size $J$ for estimating constraint in every iteration of the CSA algorithm
can be bounded by ${\cal O}(\max\{\tfrac{1}{\epsilon^2}(\log\tfrac{1}{\Lambda})^2,\tfrac{1}{\vartheta^2}\log\tfrac{1}{\Lambda^3}\})$ 
for solving problem~\eqref{1.1}-\eqref{1.1exp}.

\subsection{Strongly convex objective and strongly convex constraints}
In this subsection, we are interested in establishing the convergence of the CSA algorithm applied to strongly convex problems. More specifically, we assume that the objective function $F$ and constraint function $g$ in problem \eqref{1.1}, where $g$ is given in the form of function constraint, are both strongly convex w.r.t. $x$, i.e., $\exists \mu_F>0$ and $\mu_G>0$ s.t.
\begin{align*}
F(x_1,\xi)&\geq F(x_2,\xi)+\langle F'(x_2,\xi),x_1-x_2\rangle+\tfrac{\mu_F}{2}\|x_1-x_2\|^2, \forall x_1,x_2\in X,\\
g(x_1)&\geq g(x_2)+\langle g'(x_2),x_1-x_2\rangle+\tfrac{\mu_G}{2}\|x_1-x_2\|^2, \forall x_1,x_2\in X.
\end{align*}
For the sake of simplicity, we focus on the case when the constraint function $g$ can be evaluated exactly (i.e., $\hat G_k = g'(x_k)$). However, expectation constraints
can be dealt with using similar techniques
discussed in Section~\ref{sec_expconstr}.

In order to estimate the convergent rate of the CSA algorithm for solving strongly convex problems, we need to assume that the prox-function $V_X(\cdot,\cdot)$ and $V_Y(\cdot,\cdot)$ satisfies a quadratic growth condition
\begin{equation}\label{QuadraticCondition}
V_X(z,x)\leq \tfrac{Q}{2}\|z-x\|^2, \forall z,x \in X\text{ and} \ V_Y(z,y)\leq \tfrac{Q}{2}\|z-y\|^2, \forall z,y \in Y.
\end{equation}
{\color{black} Moreover, letting $\gamma_k$ be the stepsizes used in the CSA method, and denoting $$a_k = \left\{
            \begin{array}{ll}
              \tfrac{\mu_F\gamma_k}{Q}, & \hbox{$k\in \B$,} \\
              \tfrac{\mu_G\gamma_k}{Q}, & \hbox{$k\in \N$,}
            \end{array}
          \right.
A_k = \left\{
            \begin{array}{ll}
              1, & \hbox{$ k=1$,} \\
              (1-a_k)A_{k-1}, & \hbox{$k\geq2$,}
            \end{array}
          \right.
          \text{and }
          \rho_k = \tfrac{ \gamma_k}{A_k},
          $$
we define 
\begin{equation}\label{strong_output}
\bar{x}_{N,s} = \tfrac{\tsum_{k\in \B}\rho_k x_k}{\tsum_{k\in \B}\rho_k}
\end{equation}
as  the output of Algorithm 1.} 

The following simple result will be used in the convergence analysis of the CSA method.
\begin{lem}\label{Lemma 3}
If $a_k\in(0,1]$, k = 0,1,2,..., $A_k>0,\forall k\geq 1$, and $\{\Delta_k\}$ satisfies
$$\Delta_{k+1}\leq (1-a_k)\Delta_k + B_k, \forall k \geq 1,$$
then we have
$$\tfrac{\Delta_{k+1}}{A_k}\leq (1-a_1)\Delta_1 + \tsum_{i=1}^k\tfrac{B_i}{A_i}.$$
\end{lem}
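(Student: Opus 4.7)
The plan is to divide the hypothesized recursion by $A_k$ and exploit the definition $A_k=(1-a_k)A_{k-1}$ to obtain a telescoping sequence in $\Delta_{k+1}/A_k$.

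First I would handle $k\geq 2$. Since $a_k\in(0,1]$, we have $1-a_k\geq 0$, and $A_k>0$ by assumption, so dividing the hypothesis $\Delta_{k+1}\leq (1-a_k)\Delta_k + B_k$ by $A_k$ preserves the inequality. Using $(1-a_k)/A_k = 1/A_{k-1}$, this yields
\[
\frac{\Delta_{k+1}}{A_k}\leq \frac{\Delta_k}{A_{k-1}}+\frac{B_k}{A_k}\qquad\text{for all }k\geq 2.
\]
Summing this telescoping inequality from $k=2$ up to some index $K$ gives
\[
\frac{\Delta_{K+1}}{A_K}\leq \frac{\Delta_2}{A_1}+\sum_{i=2}^{K}\frac{B_i}{A_i}.
\]

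Next I would handle the base case $k=1$ separately. Using $A_1=1$ together with the recursion applied at $k=1$, we have $\Delta_2/A_1=\Delta_2\leq (1-a_1)\Delta_1+B_1 = (1-a_1)\Delta_1+B_1/A_1$. Substituting this into the telescoped bound absorbs the $B_1/A_1$ term into the sum, yielding
\[
\frac{\Delta_{K+1}}{A_K}\leq (1-a_1)\Delta_1+\sum_{i=1}^{K}\frac{B_i}{A_i},
\]
which is the desired conclusion after renaming $K$ to $k$.

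There is no real obstacle here; the only care needed is in the boundary manipulation of the index $k=1$ and in verifying that dividing by $A_k$ does not flip any inequality, which follows immediately from the hypotheses $a_k\in(0,1]$ and $A_k>0$.
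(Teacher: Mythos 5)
Your proof is correct, and it is the standard telescoping argument that the paper evidently has in mind (the paper states Lemma \ref{Lemma 3} without proof as a ``simple result''): divide by $A_k$, use $A_k=(1-a_k)A_{k-1}$ to collapse the recursion into $\Delta_{k+1}/A_k\le \Delta_k/A_{k-1}+B_k/A_k$, sum, and handle $k=1$ via $A_1=1$. You rightly import the recursive definition of $A_k$ from the surrounding text (the lemma as literally stated leaves $A_k$ underspecified), and your observation that $A_k>0$ rules out $a_k=1$ for $k\ge 2$ is exactly the point that makes the cancellation $(1-a_k)/A_k=1/A_{k-1}$ legitimate.
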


Below we provide an important recursion about CSA applied to strongly convex problems. This result differs from Proposition \ref{Proposition 1} for the general convex case
in that we use different weight $\rho_k$ rather than $\gamma_k$.
\begin{prop}\label{Proposition 2}
For any $1\leq s\leq N$, we have
\begin{multline}\label{prop1.1}
    \tsum_{k\in \mathcal{N}}\rho_k(\eta_k-g(x))+\tsum_{k\in \B}\rho_k [F(x_k,\xi_k)-F(x,\xi_k)]\leq (1-a_s) D_X^2 \\
    +\tfrac{1}{2}\tsum_{k\in \mathcal{B}}\rho_k\gamma_k\|  F'(x_k,\xi_k)\|_*^2 +\tfrac{1}{2}\tsum_{k\in \mathcal{N}}\rho_k \gamma_k\| g'(x_k)\|_*^2.
\end{multline}
\end{prop}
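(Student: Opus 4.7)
The plan is to derive a strengthened version of the one-step inequality of Proposition~\ref{Proposition 1} by exploiting strong convexity of $F$ and $G$, and then telescope via Lemma~\ref{Lemma 3}.

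First I would start from the generic one-step bound that follows from Lemma~\ref{Lemma 1}, namely
\[
V(x_{k+1},x)\le V(x_k,x)-\gamma_k\langle h_k,x_k-x\rangle+\tfrac{1}{2}\gamma_k^2\|h_k\|_*^2,
\]
exactly as in the proof of Proposition~\ref{Proposition 1}. I would then split on the two cases. For $k\in\B$, I would invoke strong convexity of $F(\cdot,\zeta_k)$ with parameter $\mu_F$:
\[
\langle F'(x_k,\zeta_k),x_k-x\rangle\ge F(x_k,\zeta_k)-F(x,\zeta_k)+\tfrac{\mu_F}{2}\|x_k-x\|^2.
\]
For $k\in\N$, I would use strong convexity of $G(\cdot,\xi_k)$ together with the defining inequality $G(x_k,\xi_k)>\eta_k$, giving
\[
\langle G'(x_k,\xi_k),x_k-x\rangle\ge \eta_k-G(x,\xi_k)+\tfrac{\mu_G}{2}\|x_k-x\|^2.
\]

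Next I would convert the extra quadratic term $\tfrac{\mu}{2}\|x_k-x\|^2$ into a multiple of $V(x_k,x)$ using the quadratic growth hypothesis \eqref{QuadraticCondition}, which gives $\|x_k-x\|^2\ge \tfrac{2}{Q}V(x_k,x)$. Consequently, with the definition $a_k=\mu_F\gamma_k/Q$ (resp.\ $\mu_G\gamma_k/Q$), substitution into the one-step bound yields the contracted recursion
\[
V(x_{k+1},x)\le (1-a_k)V(x_k,x)+B_k,
\]
where $B_k=-\gamma_k[F(x_k,\zeta_k)-F(x,\zeta_k)]+\tfrac{1}{2}\gamma_k^2\|F'(x_k,\zeta_k)\|_*^2$ for $k\in\B$ and $B_k=-\gamma_k[\eta_k-G(x,\xi_k)]+\tfrac{1}{2}\gamma_k^2\|G'(x_k,\xi_k)\|_*^2$ for $k\in\N$. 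This is the key structural difference with Proposition~\ref{Proposition 1}: strong convexity produces a contraction factor $(1-a_k)$ in front of $V(x_k,x)$.

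Finally, I would apply Lemma~\ref{Lemma 3} to the sequence $\{V(x_k,x)\}$ starting at index $s$, so that the $A_k$ appearing in $\rho_k=\gamma_k/A_k$ play the role of the normalizing weights. Dividing the recursion by $A_k$ telescopes: $V(x_{k+1},x)/A_k\le V(x_k,x)/A_{k-1}+B_k/A_k$, and summing from $k=s$ to $N$ while dropping the nonnegative tail $V(x_{N+1},x)/A_N$ produces
\[
-\sum_{k=s}^N \frac{B_k}{A_k}\le (1-a_s)V(x_s,x)\le (1-a_s)D_X^2,
\]
where the factor $(1-a_s)$ arises from the initial step of the recursion (exactly as in the conclusion of Lemma~\ref{Lemma 3}) and the bound $V(x_s,x)\le D_X^2$ comes from \eqref{D_X_def}. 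Rewriting $-B_k/A_k$ in terms of $\rho_k$ and $\rho_k\gamma_k=\gamma_k^2/A_k$ then yields precisely \eqref{prop1.1}.

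The main obstacle is bookkeeping, rather than an analytic difficulty: one must verify that the quadratic growth constant $Q$ aligns correctly with the definition of $a_k$, ensuring the contraction factor $(1-a_k)\in(0,1]$; and one must carefully track how the telescoping with divisors $A_k$ interacts with the initial index $s$ so that the weights $\rho_k=\gamma_k/A_k$ appear uniformly in the summation while the boundary term picks up the factor $(1-a_s)$ in front of $D_X^2$.
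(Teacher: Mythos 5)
Your proposal is correct and follows essentially the same route as the paper's proof: the one-step bound from Lemma~\ref{Lemma 1}, strong convexity of $F$ on $\B$ and of $G$ (with $G(x_k,\xi_k)>\eta_k$) on $\N$, conversion of the quadratic term into a contraction factor $(1-a_k)$ via the quadratic growth condition \eqref{QuadraticCondition}, and telescoping with Lemma~\ref{Lemma 3} before dropping the nonnegative term $V(x_{N+1},x)/A_N$. The bookkeeping with $\rho_k=\gamma_k/A_k$ and $\rho_k\gamma_k=\gamma_k^2/A_k$ is exactly as in the paper.
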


\begin{proof}
Consider the iteration $k$, $\forall s\leq k\leq N$. If $k\in \B$, by Lemma \ref{Lemma 1} and the strong convexity of $F(x,\xi)$, we have
\[
\begin{aligned}
    V(x_{k+1},x) &\leq V(x_k,x)-\gamma_k\langle h_k,x_k-x\rangle+\tfrac{1}{2}\gamma_k^2\| F'(x_k,\xi_k)\|_*^2 \\
 &=  V(x_k,x)-\gamma_k\langle F'(x_k,\xi_k),x_k-x\rangle+\tfrac{1}{2}\gamma_k^2\|  F'(x_k,\xi_k)\|_*^2 \\
     &\leq V(x_k,x)-\gamma_k\left[F(x_k,\xi_k)-F(x,\xi_k)+\tfrac{\mu_F}{2}{\color{black}\|x_k-x\|^2}\right]+\tfrac{1}{2}\gamma_k^2\|  F'(x_k,\xi_k)\|_*^2 \\
     &\leq \left(1-\tfrac{\mu_F\gamma_k}{Q}\right)V(x_k,x) - \gamma_k[F(x_k,\xi_k)-F(x,\xi_k)] + \tfrac{1}{2}\gamma_k^2\| F'(x_k,\xi_k)\|_*^2.
\end{aligned}
\]
Similarly for $k\in\N$, using Lemma \ref{Lemma 1} and the strong convexity of $g(x)$, we have
\[
\begin{aligned}
    V(x_{k+1},x) &\leq V(x_k,x)-\gamma_k\langle h_k,x_k-x\rangle+\tfrac{1}{2}\gamma_k^2\| g'(x_k)\|_*^2 \\
 &=  V(x_k,x)-\gamma_k\langle g'(x_k),x_k-x\rangle+\tfrac{1}{2}\gamma_k^2\| g'(x_k)\|_*^2 \\
    & \leq V(x_k,x)-\gamma_k\left[(g(x_k)-g(x))+\tfrac{\mu_G}{2}{\color{black}\|x_k-x\|^2}\right]+\tfrac{1}{2}\gamma_k^2\| g'(x_k)\|_*^2 \\
     &\leq \left(1-\tfrac{\mu_G\gamma_k}{Q}\right)V(x_k,x) - \gamma_k(\eta_k-g(x))+ \tfrac{1}{2}\gamma_k^2\| g'(x_k)\|_*^2.
\end{aligned}
\]
Summing up these inequalities for $s\leq k\leq N$ and using Lemma \ref{Lemma 3}, we have
\[
\begin{aligned}
    \tfrac{V(x_{N+1},x)}{A_N} &\leq \left(1-a_s\right)V(x_s,x) - \left[\tsum_{k\in \mathcal{N}}\tfrac{\gamma_k}{A_k}(\eta_k-g(x))+ \tsum_{k\in \mathcal{B}}\tfrac{\gamma_k}{A_k}[F(x_k,\xi_k)-F(x,\xi_k)]\right] \\
    &+\tfrac{1}{2}\tsum_{k\in \mathcal{N}}\tfrac{\gamma_k^2}{A_k}\| g'(x_k)\|_*^2 +\tfrac{1}{2}\tsum_{k \in \mathcal{B}}\tfrac{\gamma_k^2}{A_k}\| F'(x_k,\xi_k)\|_*^2,
\end{aligned}
\]
Using the fact $V(x_{N+1},x)/A_N\geq 0$ and  the definition of $\rho_k$, and rearranging the terms in the above inequality, we obtain \eqref{prop1.1}.
\end{proof}

\vgap

Lemma \ref{Lemma 4} below provides a sufficient condition which guarantees $\bar{x}_{N,s}$ to be well-defined.
\begin{lem}\label{Lemma 4}
Let $x^*$ be the optimal solution of \eqref{1.1}, then for some $\rho\in (0,1)$, with probability at least $1-\rho$, we have
\begin{equation}\label{cond_prop3}
\tsum_{k\in \mathcal{N}}\rho_k\eta_k+\tsum_{k\in \B}\rho_k [f(x_k)-f(x^*)]\leq \tfrac{1}{\rho}(1-a_s)D_X^2+ \tfrac{M^2}{2\rho}\tsum_{k=s}^N\rho_k\gamma_k.
\end{equation}
 If we have
\begin{equation}\label{2.5}
  \tfrac{N-s+1}{2} \min_{k\in \N}\rho_k\eta_k >\tfrac{1}{\rho}(1-a_s)D_X^2+ \tfrac{M^2}{2\rho}\tsum_{k=s}^N\rho_k\gamma_k,
\end{equation}
then conditional on \eqref{cond_prop3}, $\B\neq \emptyset$ and hence $\bar{x}_{N,s}$ is well-defined. Moreover, we have one of the following two statements holds,
\begin{description}
  \item[a)] $|\mathcal{B}| \geq (N-s+1)/2,$
  \item[b)] $
  \tsum_{k\in \B}\rho_k[ f(x_k)-f(x^*)]\leq 0.
 $
\end{description}
\end{lem}
\begin{proof}
The proof of this result is similar to that of Lemma 2 and hence the details are skipped.
\end{proof}

\vgap

With the help of Proposition \ref{Proposition 2}, we are ready to establish the main convergence properties of the CSA method for solving strongly convex problems.
\begin{thm}\label{Theorem 2}
Suppose that $\{\gamma_k\}$ and $\{\eta_k\}$ in the CSA algorithm are chosen such that \eqref{2.5} holds.
Then, with probability $1-\rho$, for any $1\leq s\leq N$, we have
\begin{align}
 & f(\bar{x}_{N,s})-f(x^*) \leq  \tfrac{2(1-a_s)D_X^2+M^2\tsum_{k=s}^N\rho_k\gamma_k}{\rho(N-s+1)\min_{s\leq k\leq N}\rho_k} , \label{CSCResult1}\\
  & g(\bar{x}_{N,s}) \leq (\tsum_{k\in \B}\rho_k)^{-1}(\tsum_{k\in \B}\rho_k\eta_k),\label{CSCResult2}
\end{align}
where $M = \max\{M_F,M_G\}$.
\end{thm}
\begin{proof}
The proof of this theorem is similar to the one of Theorem \ref{Theorem 1} and hence the details are skipped.
\end{proof}

Below we provide a stepsize policy of $s$, $\gamma_k$ and $\eta_k$ in order to achieve the optimal rate of convergence for solving strongly convex problems.
\begin{cor}\label{Coro 3}
Let $s=\tfrac{N}{2}$, $\gamma_k = \left\{
                  \begin{array}{ll}
                    \tfrac{2Q}{\mu_F(k+1)} , & \hbox{if $k\in \B$;} \\
                    \tfrac{2Q}{\mu_G(k+1)} , & \hbox{if $k\in \N$,}
                  \end{array}
                \right.$, $\eta_k = \tfrac{2\mu_G Q}{k \rho}\left(\tfrac{2D_X^2}{k}+ \tfrac{M^2}{\mu^2}\right)$, then with probability $1-\rho$, we have
\[
\begin{aligned}
   &f(\bar{x}_{N,s})-f(x^*) \leq  \tfrac{4\mu_FD_X^2}{N^2Q\rho} +  \tfrac{2\mu_FM^2Q }{N\mu^2\rho},\\
   &g(\bar{x}_{N,s})\leq \tfrac{8\mu_G Q D_X^2}{N^2\rho}+\tfrac{4\mu_G M^2Q}{N\mu^2\rho},
\end{aligned}
\]
where $M =\max \{M_F,M_G\}$ and $\mu = \min\{\mu_F,\mu_G\}$.
\end{cor}
\begin{proof}
Based on our selection of $s$, $\gamma_k$, $\eta_k$ and the definition of $a_k$, $A_k$ and $\rho_k$, we have
$$a_k = \tfrac{2}{k+1},\ A_k = \prod_{i=2}^k (1-a_i) = \tfrac{2}{k(k+1)},\ \rho_k=\left\{
                  \begin{array}{ll}
                    \tfrac{kQ}{\mu_F} , & \hbox{if $k\in \B$;} \\
                    \tfrac{kQ}{\mu_G} , & \hbox{if $k\in \N$,}
                  \end{array}
                \right.$$
For $\forall s\leq k\leq N$, by the definition of $s$, $\gamma_k$ and $\eta_k$, we have 
\begin{multline*}
   \tfrac{1}{\rho} (1-a_s)V(x_s,x)+ \tfrac{1}{2\rho}\tsum_{k=s}^N\rho_k\gamma_k M^2 \\
    \leq \tfrac{1}{\rho}D_X^2 +\tfrac{1}{2\rho}\tsum_{k\in \B}\tfrac{\gamma_k^2}{A_k}M^2+\tfrac{1}{2\rho}\tsum_{k\in \N}\tfrac{\gamma_k^2}{A_k}M^2 \leq \tfrac{D_X^2}{\rho}+ Q^2 (|\B|\tfrac{M^2}{\mu_F^2}+|\N|\tfrac{M^2}{\mu_G^2})\leq \tfrac{D_X^2}{\rho}+ \tfrac{M^2Q^2 N}{2\mu^2\rho},
\end{multline*}
\begin{equation*}
  \tfrac{N-s+1}{2} \min_{k\in \N}\rho_k\eta_k =\tfrac{N}{4}\min_{k\in \N}\tfrac{kQ}{\mu_G}\tfrac{2\mu_G Q}{k\rho}\left(\tfrac{2D_X^2}{k}+ \tfrac{M^2}{\mu^2}\right)
=  \tfrac{D_X^2}{\rho} +  \tfrac{M^2Q^2 N}{2\mu^2\rho}.
\end{equation*}
Combining the above two inequalities, we can easily see that condition \eqref{2.5} holds.
It then follows from Theorem \ref{Theorem 2} that with probability $1-\rho$ 
\begin{align*}
f(\bar{x}_{N,s})-f(x^*) \leq &(\rho (N-s+1)\min_{s\leq k\leq N}\rho_k)^{-1}\left(2(1-a_s)D_X^2+\tsum_{k=s}^N\rho_k\gamma_k M^2\right)\\
\leq& \tfrac{4\mu_FD_X^2}{N^2Q\rho} +  \tfrac{2\mu_FM^2Q }{N\mu^2\rho},\\
g(\bar{x}_{N,s})\leq &(\tsum_{k\in \B}\rho_k)^{-1}(\tsum_{k\in \B}\rho_k\eta_k)\leq \tfrac{8\mu_G Q D_X^2}{N^2\rho}+\tfrac{4\mu_G M^2Q}{N\mu^2\rho}.
\end{align*}
\end{proof}

In view of Corollary \ref{Coro 3}, the CSA algorithm can achieve the optimal rate of convergence for strongly convex optimization with strongly convex constraints.
To the best of our knowledge, this is the first time such a complexity result is obtained in the literature and this result is new also for the deterministic setting.

\setcounter{equation}{0}
\section{Expectation constraints over problem parameters}
In this section, we are interested in solving a class of parameterized stochastic optimization problems whose parameters
are defined by expectation constraints as described in \eqref{para1}-\eqref{para2}, under the assumption that such a pair of solutions
satisfying \eqref{para1}-\eqref{para2} exists.

Our goal in this section is to present a variant of the CSA algorithm to approximately solve problem~\eqref{para1}-\eqref{para2} and
establish its convergence properties. More specifically, we discuss this variant of the CSA algorithm when applied to
the parameterized stochastic optimization problem in \eqref{para1}-\eqref{para2}  and then consider a modified problem
by imposing certain strong convexity assumptions to the function $\Phi(x,y,\zeta)$ w.r.t. $y$ and $G(x,\xi)$ w.r.t. $x$ in Subsections 4.1 and 4.2,
respectively.
In Subsection 4.3, we discuss some large deviation properties for the variant of the CSA method for the problem defined by \eqref{para1}-\eqref{para2}.

\subsection{Stochastic optimization with parameter feasibility constraints}

Given tolerance $\eta >0$ and target accuracy $\epsilon > 0$, we will present a variant of the CSA algorithm,
namely cooperative stochastic parameter approximation (CSPA),  to find a pair of approximate solutions $(\bar{x},\bar{y}) \in X \times Y$
s.t. $\bbe[g(\bar{x})] \leq \eta$ and
$
\bbe[ \phi(\bar{x},\bar y) - \phi(\bar x,y)] \leq \epsilon, \ \forall y\in Y,
$ in this subsection. Before we describe the CSPA method, we need slightly modify Assumption 1.
\begin{assumption}\label{CSPA_assump1}
For any $x\in X$ and $y\in Y$,
$$\bbe[\| \Phi'(x,y,\zeta)\|_*^2]\leq M_\Phi^2 \ \ \ \mbox{and} \ \ \ \bbe[\|G'(x,\xi)\|_*^2]\leq M_G^2,$$
where $\Phi'(x,y,\zeta) \in \partial_y \Phi(x, y,\zeta)$ and $G'(x,\xi) \in \partial_x G(x, \xi)$.
\end{assumption}
We will also discuss the convergent properties under the light-tail assumptions as follows.
\begin{assumption}\label{CSPA_lighttail}
\begin{align*}
\bbe[\exp\{\| \Phi'(x,y,\zeta)\|_*^2/M_\Phi^2\}]\leq\exp\{1\},\\
\bbe[\exp\{(\Phi(x,y,\zeta)-\phi(x,y))^2/\sigma^2\}]\leq \exp\{1\},\\
\bbe[\exp\{(G(x,\xi)-g(x))^2/\sigma^2\}]\leq \exp\{1\}.
\end{align*}
\end{assumption}
We assume that the distance generating functions $\omega_X : X \mapsto \bbr$ and $\omega_Y : Y \mapsto \bbr$ are strongly convex with modulus $1$ w.r.t.
given norms in $\bbr^n$ and $\bbr^m$, respectively, and that their associated prox-mappings $P_{x,X}$ and $P_{y,Y}$ (see \eqref{Prox-mapping def}) are easily computable.

We make the following modifications to the CSA method in Section 2.1 in order to apply it to solve problem \eqref{para1}-\eqref{para2}. {\color{black} Firstly, we still check the solution $(x_k,y_k)$ to see whether $x_k$ violates the condition $\tsum_{i=1}^k\gamma_iG(x_i,\xi_i)/\tsum_{i=1}^k\gamma_i\leq \eta_k$. If so, we set the search direction as $G'(x_k,\xi_k)$ to update $x_k$, while keeping $y_k$ intact. Otherwise, we only update $y_k$ along the direction $\Phi'(\bar x_k,y_k,\zeta_k)$. Secondly, we define the output as a randomly selected $(\bar x_k,y_k)$ according to a certain probability distribution instead of the ergodic mean of $\{(\bar x_k,y_k)\}$, where $ \bar x_k$ denotes
the average of $\{x_k\}$ (see \eqref{def_avg_para}).
Since we are solving a coupled optimization and feasibility problem, each iteration of our algorithm only updates either $y_k$ or $x_k$ and requires the computation
of either  $\Phi'$ or $G'$  depending on whether $\tsum_{i=1}^k\gamma_iG(x_i,\xi_i)/\tsum_{i=1}^k\gamma_i\leq\eta_k$.} This differs from the SA method used in Jiang and Shanbhag \cite{jiang2014solution}
that requires two projection steps and the computation of two subgradients at each iteration to solve a different parameterized stochastic optimization problem.

\begin{algorithm}
\caption{The cooperative stochastic parameter approximation method}
\ \ \ {\bf Input:} initial point ${\color{black}(x_1,y_1)}$, stepsize $\{\gamma_k\}$, tolerance $\{\eta_k\}$, number of iterations $N$, $\tau(1)=1$.
\begin{algorithmic}
\State
{\bf for } k=1,2,...,N \\
  \indent\ \ \ \ \ \  {\bf if} {\color{black} $\tsum_{i=1}^{\tau(k)}\gamma_iG(x_i,\xi_i)/\tsum_{i=1}^{\tau(k)}\gamma_i\leq\eta_k$}
  \indent\begin{equation} \label{def_avg_para}
      {\color{black} y_{k+1}=P_{y_k, Y}(\gamma_k \Phi'(\bar x_k,y_k,\zeta_k)), \ \tau(k+1)=\tau(k), \text{ where } \bar x_k = \tsum_{i=1}^{\tau(k)}\gamma_ix_i/\tsum_{i=1}^{\tau(k)}\gamma_i};
  \indent\end{equation}
  \indent{\bf else}
  \begin{equation}
    {\color{black} l=\tau(k), x_{l+1}=P_{x_l,X} (\gamma_l G'(x_l,\xi_l)), y_{k+1}=y_k,\ \tau(k+1)=\tau(k)+1.}
  \end{equation}
\indent{\bf end if}

  \State {\bf end for}
  \State {\bf Output:} Set {\color{black} $\B := \{s\leq k\leq N| \tsum_{i=1}^{\tau(k)}\gamma_iG(x_i,\xi_i)/\tsum_{i=1}^{\tau(k)}\gamma_i\leq \eta_k\}$} for some $ 1 \le s \le N$, and define the output $(\bar x_R,y_R)$, where R is randomly chosen according to
       \begin{equation}\label{PR}
       \Prob\{R=k\}=\tfrac{\gamma_k}{\tsum_{k\in \mathcal{B}}\gamma_k},k\in \mathcal{B}.
       \end{equation}

\end{algorithmic}
\end{algorithm}

With a little abuse of notation, we still use $\B$ to represent the set $\{s\leq k\leq N| {\color{black} \tsum_{i=1}^{\tau(k)}\gamma_iG(x_i,\xi_i)/\tsum_{i=1}^{\tau(k)}\gamma_i\leq \eta_k}\}$, $I = \{s, \ldots, N\}$, and $\N = I \setminus \B$. The following result mimics Proposition \ref{Proposition 1}.
\begin{prop}\label{Proposition 3}
For any $1\leq s\leq N$, we have
\begin{align}
\tsum_{k\in \B}\gamma_k\langle\Phi'(\bar x_k,y_k,\zeta_k),y_k-y\rangle
& \leq  D_Y^2+ \tfrac{1}{2}\tsum_{k\in \mathcal{B}}\gamma_k^2\| \Phi'(\bar x_k,y_k,\zeta_k)\|_*^2,\ \forall y\in Y, \label{3.2.1} \\
\tsum_{i=\tau(s)}^{\tau(N)}\gamma_i[G(x_i,\xi_i)-G(x,\xi_i)] & \leq D_X^2+\tfrac{1}{2}\tsum_{i=\tau(s)}^{\tau(N)}\gamma_i^2\| G'(x_i,\xi_i)\|_*^2,\ \forall x\in X, \label{3.2.2}
\end{align}
where $D_X \equiv D_{X, w_x}$ and $D_Y \equiv D_{Y, w_y}$ are defined as in \eqref{D_X_def}.
\end{prop}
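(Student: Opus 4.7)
The plan is to mirror the argument of Proposition~\ref{Proposition 1}, but to take advantage of the key structural property of the CSPA scheme: at each iteration only one of the two iterates $y_k$ or $x_k$ is moved. As a result, the Bregman recursions in the $y$-variable and in the $x$-variable decouple completely, and the two stated inequalities can be obtained from two independent telescoping arguments over $\B$ and $\N$, respectively.

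For \eqref{3.2.1} I would invoke Lemma~\ref{Lemma 1} in the space $Y$ with prox-function $V_Y$. If $k\in\B$, the update $y_{k+1}=P_{y_k,Y}(\gamma_k\Phi'(x_k,y_k,\zeta_k))$ yields, for every $y\in Y$,
$$V_Y(y_{k+1},y)\le V_Y(y_k,y)-\gamma_k\langle \Phi'(x_k,y_k,\zeta_k),y_k-y\rangle+\tfrac12\gamma_k^2\|\Phi'(x_k,y_k,\zeta_k)\|_*^2,$$
while if $k\in\N$ the algorithm sets $y_{k+1}=y_k$ and hence $V_Y(y_{k+1},y)=V_Y(y_k,y)$. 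Summing these relations from $k=s$ to $k=N$ the $V_Y$ terms telescope, every contribution with index in $\N$ drops out of the right-hand side, and using $V_Y(y_{N+1},y)\ge 0$ together with $V_Y(y_s,y)\le D_Y^2$ gives \eqref{3.2.1} after rearrangement.

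For \eqref{3.2.2} I would repeat exactly the same pattern in the space $X$ with prox-function $V_X$, now with the roles of $\B$ and $\N$ swapped: Lemma~\ref{Lemma 1} applied to $x_{k+1}=P_{x_k,X}(\gamma_k G'(x_k,\xi_k))$ supplies the one-step recursion when $k\in\N$, and $x_{k+1}=x_k$ freezes $V_X$ whenever $k\in\B$. The one place where the argument differs from the $y$-case is that I need to convert the linear term $\gamma_k\langle G'(x_k,\xi_k),x_k-x\rangle$ into $\gamma_k[\eta_k-G(x,\xi_k)]$; this is done exactly as in the proof of Proposition~\ref{Proposition 1}, using the subgradient inequality for the convex function $G(\cdot,\xi_k)$,
$$\langle G'(x_k,\xi_k),x_k-x\rangle \ge G(x_k,\xi_k)-G(x,\xi_k)\ge \eta_k-G(x,\xi_k),$$
where the last step relies on the defining property $G(x_k,\xi_k)>\eta_k$ for indices $k\in\N$. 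Telescoping the $V_X$-recursion and bounding $V_X(x_s,x)\le D_X^2$ then delivers \eqref{3.2.2}.

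There is no genuinely hard step here; the proof is essentially a bookkeeping exercise made almost automatic by the fact that CSPA never perturbs $V_Y$ on $\N$-iterations and never perturbs $V_X$ on $\B$-iterations. The only mild subtlety worth flagging is precisely this decoupling: one must verify that the ``frozen'' iterate contributes zero change to the corresponding prox-function, which is what allows the two sums to be handled independently with clean right-hand sides $D_Y^2$ and $D_X^2$ rather than a single combined $V$-term as in Proposition~\ref{Proposition 1}.
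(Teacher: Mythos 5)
Your proposal is correct and follows essentially the same route as the paper's own proof: apply Lemma~\ref{Lemma 1} separately in $Y$ and $X$, observe that the frozen iterate leaves the corresponding prox-function unchanged, telescope, and use the subgradient inequality together with $G(x_k,\xi_k)>\eta_k$ on $\N$ to produce the $\eta_k-G(x,\xi_k)$ term. No gaps.
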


\begin{proof}
By Lemma \ref{Lemma 1}, if $k\in \mathcal{B}$,
$$
  V(y_{k+1},y)\leq V(y_k,y)+ \gamma_k\langle \Phi'(\bar x_k,y_k,\zeta_k), y-y_k \rangle + \tfrac{1}{2}\gamma_k^2\|\Phi'(\bar x_k,y_k,\zeta_k)\|_*^2.
$$
Also note that $V(y_{k+1},y) = V(y_k,y)$ for $k\in \N$. Summing up these relations for $k\in \B\cup\N$ and using the fact that $V(y_s,y) \leq D_Y^2$, we have
\begin{equation}\label{3.2.3}
\begin{aligned}
 V(y_{N+1},y) & \leq V(y_s,y)+\tfrac{1}{2}\tsum_{k\in \mathcal{B}}\gamma_k^2\|\Phi'(\bar x_k,y_k,\zeta_k)\|_*^2  -\tsum_{k\in \mathcal{B}}\gamma_k\langle \Phi'(\bar x_k,y_k,\zeta_k),y_k-y\rangle\\
   &\leq D_Y^2+\tfrac{1}{2}\tsum_{k\in \mathcal{B}}\gamma_k^2\|\Phi'(\bar x_k,y_k,\zeta_k)\|_*^2 -\tsum_{k\in \mathcal{B}}\gamma_k\langle \Phi'(\bar x_k,y_k,\zeta_k),y_k-y\rangle.
\end{aligned}
\end{equation}
Similarly for $\tau(s)\leq i \leq \tau(N)$, we have
$$
\begin{aligned}
  V(x_{i+1},x) & \leq V(x_i,x) + \gamma_i\langle G'(x_i,\xi_i), x-x_i \rangle + \tfrac{1}{2}\gamma_i^2\|G'(x_i,\xi_i)\|_*^2.
\end{aligned}
$$
Summing up these relations for $\tau(s)\leq i \leq \tau(N)$ and using the fact that $V(x_{\tau(s)},x)\leq D_X^2$, we obtain
\begin{equation}\label{3.2.4}
\begin{aligned}
V(x_{\tau(N)+1},x) \leq D_X^2+\tsum_{i=\tau(s)}^{\tau(N)}\gamma_i^2\|G'(x_i,\xi_i)\|_*^2 -\tsum_{i=\tau(s)}^{\tau(N)}(G(x_i,\xi_i)-G(x,\xi_i)).
\end{aligned}
\end{equation}
Using the facts $V(y_{N+1},y)\geq 0$ and $V(x_{\tau(N)+1},x)\geq 0$, and rearranging the terms in \eqref{3.2.3} and \eqref{3.2.4}, we then obtain \eqref{3.2.1} and \eqref{3.2.2}, respectively.
\end{proof}

\vgap

The following result provides a sufficient condition under which $(\bar x_R,y_R)$ is well-defined.
\begin{lem}\label{Lemma 6}
{\color{black} The following statements holds.

\begin{description}
  \item[a)] Under Assumption~\ref{CSPA_assump1}, if for any $\lambda>0$, we have
\begin{equation}\label{3.3}
    \tfrac{N-s+1}{2}\min_{k\in \N}{\gamma_k\eta_k} > \lambda D_X^2+\lambda\tfrac{M_G^2}{2} \tsum_{k=s}^{N}\gamma_k^2,
\end{equation}
then $\Prob\{|\mathcal{B}|\geq \tfrac{N-s+1}{2}\} \geq 1- 1/\lambda$.
  \item[b)] Under Assumption~\ref{CSPA_lighttail}, if for any $\lambda>0$, we have
\begin{equation}\label{3.4}
    \tfrac{N-s+1}{2}\min_{k\in \N}{\gamma_k\eta_k} > D_X^2+(1+\lambda)\tfrac{M_G^2}{2}\tsum_{k=\tau(s)}^{\tau(N)}\gamma_k^2+\lambda\sigma\sqrt{ \tsum_{k=s}^{N}\gamma_k^2},
\end{equation}
then $\Prob\{|\mathcal{B}|\geq \tfrac{N-s+1}{2}\} \geq 1- \exp\{-\lambda\}-\exp\{-\tfrac{\lambda^2}{3}\}$.
\end{description}}
\end{lem}
\begin{proof}
First let us show part a), set $\delta_k = G(x^*,\xi_k) - g(x^*)$, it follows from \eqref{3.2.2} with $x = x^*$ that
$$\tsum_{i=\tau(s)}^{\tau(N)}\gamma_iG(x_i,\xi_i)-\tsum_{i=\tau(s)}^{\tau(N)}\gamma_ig(x^*) \leq D_X^2+\tfrac{1}{2}\tsum_{i=\tau(s)}^{\tau(N)}\gamma_i^2\| G'(x_i,\xi_i)\|_*^2+\tsum_{i=\tau(s)}^{\tau(N)}\gamma_i\delta_i.$$
The above relation, in view of $g(x^*)\leq 0$ and the fact $\tsum_{i=\tau(s)}^{\tau(N)}\gamma_iG(x_i,\xi_i)\geq \eta_{\tau(N)}\tsum_{i=\tau(s)}^{\tau(N)}\gamma_i$, implies that
$$ |\N|\min_{k\in \N}{\gamma_k\eta_k}\leq \eta_{\tau(N)}\tsum_{k=\tau(s)}^{\tau(N)}\gamma_k \leq D_X^2+\tfrac{1}{2}\tsum_{k=\tau(s)}^{\tau(N)}\gamma_k^2\| G'(x_k,\xi_k)\|_*^2+\tsum_{k=\tau(s)}^{\tau(N)}\gamma_k\delta_k.
$$
Under Assumption~\ref{CSPA_assump1},  for any $\lambda>0$, using Markov's inequality, we have
$$\Prob\{|\N|\min_{k\in \N}{\gamma_k\eta_k} \leq \lambda D_X^2+\lambda\tfrac{M_G^2}{2} \tsum_{k=s}^{N}\gamma_k^2\} \geq 1-1/\lambda.$$
For contradiction, suppose that $|\mathcal{B}|< \tfrac{N-s+1}{2}$, i.e., $\tau(N)-\tau(s)=|\N|\geq \tfrac{N-s+1}{2}$. Combining \eqref{3.3}, we obtain the contradiction. Hence, part a) holds.
Under Assumption~\ref{CSPA_lighttail}, for any $\lambda>0$, it follows from the Markov's inequality and Bernstein's inequality that
$$
   \Prob\left(\tsum_{k=\tau(s)}^{\tau(N)}\gamma_k^2\| G'(x_k,\xi_k)\|_*^2> (1+\lambda)M_G^2\tsum_{k=\tau(s)}^{\tau(N)}\gamma_k^2\right)\leq \exp\{-\lambda\}
  $$
 and
$$
    \Prob\left\{\tsum_{k=\tau(s)}^{\tau(N)}\gamma_k\delta_k>\lambda\sigma \sqrt{\tsum_{k=s}^N\gamma_k^2}\right\}\leq \exp\{-\lambda^2/3\}.
$$
Hence,
$$\Prob\left\{ |\N|\min_{k\in \N}{\gamma_k\eta_k} \leq D_X^2+(1+\lambda)\tfrac{M_G^2}{2}\tsum_{k=\tau(s)}^{\tau(N)}\gamma_k^2+\lambda\sigma\sqrt{ \tsum_{k=s}^{N}\gamma_k^2} \right\} \geq 1-  \exp\{-\lambda\}-\exp\{-\tfrac{\lambda^2}{3}\}.$$
For contradiction, suppose that $|\mathcal{B}|< \tfrac{N-s+1}{2}$, i.e., $\tau(N)-\tau(s)=|\N|\geq \tfrac{N-s+1}{2}$. 
Combining \eqref{3.4}, part b) holds. 
\end{proof}

\vgap

Theorem \ref{Theorem 4} summarizes the main convergence properties of Algorithm 2 applied to problem \eqref{para1}-\eqref{para2}.
\begin{thm}\label{Theorem 4}
The following statements holds for the CSPA algorithm. 
\begin{description}
  \item[a)] Under Assumption~\ref{CSPA_assump1}, we have, $\forall \lambda >0$,
\begin{align}
  &\Prob\{\bbe_R[\phi(\bar x_R,y_R)-\phi(\bar x_R,y^*(\bar x_R))] \leq \lambda \tfrac{2D_Y^2+M_\Phi^2\tsum_{k\in \mathcal{B}}\gamma_k^2}{2\tsum_{k\in \B}\gamma_k}\}\geq 1-1/\lambda,\label{3.3.2}\\
 &\Prob\left\{g(\bar x_R) \leq  \eta_R+\sigma\lambda\tfrac{\sqrt{\tsum_{k=\tau(s)}^{\tau(N)}\gamma_k^2}}{\tsum_{k=\tau(s)}^{\tau(N)}\gamma_k}\right\}\geq 1- \tfrac{1}{\lambda^2}.\label{3.3.3}
\end{align}
  \item[b)] Under Assumption~\ref{CSPA_lighttail}, we have,  $\forall \lambda>0$,
  \begin{align}
 &\Prob\left\{ \bbe_R[\phi(\bar x_R,y_R)-\phi(\bar x_R,y^*(\bar x_R)) ]\geq K_0+\lambda K_1\right\}\leq \exp\{-\lambda\}+\exp\{-\lambda^2/3\},\label{3.4.2}\\
 &\Prob\left\{g(\bar x_R) \geq \eta_R+\lambda\sigma\tfrac{\sqrt{\tsum_{k=\tau(s)}^{\tau(N)}\gamma_k^2}}{\tsum_{k=\tau(s)}^{\tau(N)}\gamma_k}\right\}\leq \exp\{-\lambda^2/3\},\label{3.4.3}
\end{align}
where $K_0 = \tfrac{2D_Y^2+M_\Phi^2\tsum_{k\in \mathcal{B}}\gamma_k^2}{2\tsum_{k\in \B}\gamma_k}$ and $K_1 = \tfrac{M_\Phi^2\tsum_{k\in \B}\gamma_k^2+4M_\Phi D_Y\sqrt{\tsum_{k\in \B}\gamma_k^2}}{2\tsum_{k\in \B}\gamma_k}$.
\end{description}
\end{thm}
\begin{proof}
Let us prove part a) first. Set $\Delta_k = \Phi(\bar x_k,y_k,\zeta_k) - \phi(\bar x_k, y_k)$, it follows from \eqref{3.2.1} (fix $y = y^*$) that
\begin{equation}\label{3.5}
\tsum_{k\in  \mathcal{B}}\gamma_k\left[ \phi(x_k,y_k)-\phi(x_k,y^*(x_k))\right] \leq D_Y^2 +\tfrac{1}{2}\tsum_{k\in \mathcal{B}}\gamma_k^2\|\Phi'(\bar x_k,y_k,\zeta_k)\|_*^2+\tsum_{k\in  \mathcal{B}}\gamma_k\Delta_k(y-y_k).
\end{equation}
Since conditional on $\zeta_{[k-1]}$, the expectation of $\Delta_k$ equals to zero, then taking expectation on both sides of \eqref{3.5}, we have
$$\bbe[\tsum_{k\in  \mathcal{B}}\gamma_k\left[ \phi(x_k,y_k)-\phi(x_k,y^*(x_k))\right]] \leq D_Y^2 +\tfrac{M_\Phi^2}{2}\tsum_{k=s}^N\gamma_k^2.$$
Using the Markov's inequality, we have
$$\Prob\{\tsum_{k\in  \mathcal{B}}\gamma_k\left[ \phi(x_k,y_k)-\phi(x_k,y^*(x_k))\right]\leq \lambda(D_Y^2 +\tfrac{M_\Phi^2}{2}\tsum_{k=s}^N\gamma_k^2)\} \geq 1-1/\lambda.$$
Hence, dividing both sides by $\tsum_{k\in\B}\gamma_k$ and definition of $R$, we have \eqref{3.3.2}.
Denote $\delta_k = G(x_k,\xi_k) - g(x_k)$. It then follows from the convexity of $g(\cdot)$ and the definition of the set $ \mathcal{B}$ that
\begin{equation}\label{3.6}
g(\bar x_k) \leq \tfrac{\tsum_{k=\tau(s)}^{\tau(N)}\gamma_kg(x_k)}{\tsum_{k=\tau(s)}^{\tau(N)}\gamma_k} \leq \eta_k - \tfrac{\tsum_{k=\tau(s)}^{\tau(N)}\gamma_k\delta_k}{\tsum_{k=\tau(s)}^{\tau(N)}\gamma_k}.
\end{equation}
Using the fact that $\bbe [\delta_k|\xi_{[k-1]}]=0$ and $\bbe[|\delta_k|^2]\leq \sigma^2$, we have
$$\bbe\left[ \left|\tfrac{\tsum_{k=\tau(s)}^{\tau(N)}\gamma_k\delta_k}{\tsum_{k=\tau(s)}^{\tau(N)}\gamma_k}\right|^2 \right] \leq \tfrac{\tsum_{k=\tau(s)}^{\tau(N)}\gamma_k^2\sigma^2}{(\tsum_{k=\tau(s)}^{\tau(N)}\gamma_k)^2}.$$
From the Chebyshev's inequality, we have \eqref{3.3.3}. Hence the part a) holds.

Under Assumption~\ref{CSPA_lighttail}, \eqref{3.5} still holds.
Using the fact that $\bbe[\exp\{\| \Phi'(\bar x_k,y_k,\zeta_k)\|_*^2/M_\Phi^2\}]\leq \exp\{1\}$ and Jensen's inequality, we have
$\bbe[\exp\{\tsum_{k\in \B}\gamma_k^2\| \Phi'(\bar x_k, y_k,\zeta_k)\|_*^2/M_\Phi^2\tsum_{k\in\B}\gamma_k^2\}]\leq \exp\{1\}.$
It then follows from Markov's inequality that $\forall \lambda\geq 0$,
\begin{equation}\label{3.5.1}
\begin{aligned}
   &\Prob(\tsum_{k\in \B}\gamma_k^2\| \Phi'(\bar x_k, y_k,\zeta_k)\|_*^2> (1+\lambda)M_\Phi^2\tsum_{k\in\B}\gamma_k^2) 
   \leq \tfrac{\exp\{1\}}{\exp\{1+\lambda\}} \leq \exp\{-\lambda\}.
\end{aligned}
\end{equation}
Also,
\begin{equation}\label{3.5.2}
\Prob\{\tsum_{k\in \B}\gamma_k\Delta_k(y-y_k)> 2\lambda M_\Phi D_Y\sqrt{\tsum_{k\in\B}\gamma_k^2}\}\leq \exp\{-\lambda^2/3\}
\end{equation}
Combining \eqref{3.5}, \eqref{3.5.1} and \eqref{3.5.2}, we have \eqref{3.4.2}.
Similarly, we have
\begin{equation}\label{3.6.1}
\Prob\{\tsum_{k=\tau(s)}^{\tau(N)}\gamma_k\delta_k\geq \lambda \sigma\sqrt{\tsum_{k=\tau(s)}^{\tau(N)}\gamma_k^2}\}\leq \exp\{-\lambda^2/3\}
\end{equation}
Combining \eqref{3.6} and \eqref{3.6.1}, we have \eqref{3.4.3}.
\end{proof}

\vgap

Below we provide a special selection of $s$, $\{\gamma_k\}$ and $\{\eta_k\}$.
\begin{cor}\label{Coro 4}
Denote $\nu := (M_GD_Y)/(M_\Phi D_X).$ Then we have the following statements hold.
\begin{description}
  \item[a)] Under Assumption~\ref{CSPA_assump1}, if $s=\tfrac{N}{2}+1$, $\gamma_k= \tfrac{D_X}{M_G\sqrt{k}}$ and $\eta_k=\tfrac{6M_GD_X}{\sqrt{k}\rho}$ for $k = 1,\ldots, N$, then 
\begin{align}
 &\Prob\left\{\bbe_R[\phi(\bar x_R,y_R)-\phi(\bar x_R,y^*(\bar x_R))] \leq \lambda\tfrac{8M_\Phi D_Y}{\sqrt{N}}\max\{ \nu, \tfrac{1}{\nu}\}\right\}\geq (1-\tfrac{1}{\lambda})(1-\rho),\label{3.7.1}\\
 &\Prob\left\{g(\bar x_R) \leq \lambda\tfrac{\sqrt{2}D_X}{\rho M_G\sqrt{N}}\right\}\geq 1- \tfrac{1}{\lambda^2}.\label{3.7.2}
\end{align}
  \item[b)] Under Assumption~\ref{CSPA_lighttail},  if $s=\tfrac{N}{2}+1$, $\gamma_k= \tfrac{D_X}{M_G\sqrt{k}}$ and $\eta_k=\tfrac{M_G}{\sqrt{k}}(6D_X+\tfrac{2}{\rho}D_X+\tfrac{4\sigma}{\rho})$ for $k = 1,\ldots, N$, then 
  \begin{align*}
 &\Prob\left\{\bbe_R[\phi(\bar x_R,y_R)-\phi(\bar x_R,y^*(\bar x_R))] \leq K_0+\lambda K_1\right\} \\
 & \geq ( 1-\exp\{-\lambda\}-\exp\{-\lambda^2/3\})(1-\exp\{- 1/\rho \}-\exp\{-1/3\rho^2\}),\\ 
 &\Prob\left\{g(\bar x_R) \leq\tfrac{\sqrt{2}D_X}{\rho M_G\sqrt{N}}+\lambda\tfrac{5\sigma}{\sqrt{N}}\right\}\geq 1-\exp\{-\lambda^2/3\},
\end{align*}
where $K_0 = \tfrac{8M_\Phi D_Y}{\sqrt{N}}\max\{ \nu, \tfrac{1}{\nu}\}$ and $K_1 = \tfrac{1}{\sqrt{N}}\left(\tfrac{4M_\Phi^2D_X}{M_G}+10M_\Phi D_Y\right)$.
\end{description}
\end{cor}
\begin{proof}
Similarly to Corollary \ref{Coro 1}, we can show that 
$$\tfrac{N-s+1}{2}\min_{k\in \N}{\gamma_k\eta_k} = \tfrac{N}{4}\min_k \tfrac{6D_X^2}{k\rho} = \tfrac{3D_X^2}{2\rho},$$
$$ \tfrac{ D_X^2}{\rho}+\tfrac{M_G^2}{2\rho}\tsum_{k=s}^{N}\gamma_k^2 =  \tfrac{ D_X^2}{\rho}+\tfrac{M_G^2}{2\rho}\tsum_{k=s}^{N}\tfrac{D_X^2}{M_G^2 k} <  \tfrac{ D_X^2}{\rho}+ \tfrac{ D_X^2\log2}{2\rho}.$$
Hence by Lemma \ref{Lemma 6}.a), we have $\Prob\{|\B| \geq \tfrac{N}{4}\}\geq 1-\rho$.
It then follows from Theorem \ref{Theorem 4} a) that
$$\tsum_{k\in \B}\gamma_k = \tsum_{k\in \B}\tfrac{D_X}{M_G\sqrt{k}}\geq \tfrac{D_X}{M_G}\tfrac{N}{4}\tfrac{1}{\sqrt{N}} = \tfrac{D_X\sqrt{N}}{4M_G}.$$
$$
\begin{aligned}
\tfrac{2D_Y^2+M_\Phi^2\tsum_{k\in \mathcal{B}}\gamma_k^2}{2\tsum_{k\in \B}\gamma_k}&\leq \tfrac{2M_G }{D_X\sqrt{N}}\left[2D_Y^2+\tsum_{k\in\B}\tfrac{D_X^2M_\Phi^2}{M_G^2k}\right] 
\leq \tfrac{2M_G }{D_X\sqrt{N}}\left[2D_Y^2+\tsum_{k=N/2}^N\tfrac{D_X^2M_\Phi^2}{M_G^2k}\right] \\
& \leq \tfrac{2M_G }{D_X\sqrt{N}}[2D_Y^2+\log 2D_X^2\tfrac{M_\Phi^2}{M_G^2}] \leq \tfrac{8M_\Phi D_Y}{\sqrt{N}}\max\{ \nu, \tfrac{1}{\nu}\}.
\end{aligned}
$$
Similarly, part b) follows from Theorem~\ref{Theorem 4}.b). 

\end{proof}

By Corollary~\eqref{Coro 4}, the CSPA method applied to \eqref{para1}-\eqref{para2} can achieve an ${\cal O}(1/\sqrt{N})$ rate of convergence.

\subsection{CSPA with strong convexity assumptions}
In this subsection, we modify problem \eqref{para1}-\eqref{para2} by imposing certain strong convexity assumptions to $\Phi$ and $G$ with respect to $y$ and $x$, respectively, i.e.,
$\exists \mu_\Phi,\mu_G >0 $, s.t.
\begin{align}
\Phi(x,y_1,\zeta)\geq \Phi(x,y_2,\zeta)+\langle \Phi'(x,y_2,\zeta),y_1-y_2\rangle+\tfrac{\mu_\Phi}{2}\|y_1-y_2\|^2,\ \ \forall y_1,y_2\in Y. \label{SCassumption3}\\
G(x_1,\xi)\geq G(x_2,\xi)+\langle G'(x_2,\xi),x_1-x_2\rangle+\tfrac{\mu_G}{2}\|x_1-x_2\|^2,\ \ \forall x_1,x_2\in X.\label{SCassumption4}
\end{align}
We also assume that the pair of solutions $(x^*,y^*)$ exists for problem \eqref{para1}-\eqref{para2}. Our main goal in this subsection is to estimate the convergence properties of the CSPA algorithm under these new assumptions.

We need to modify the probability distribution \eqref{PR} used in the CSPA algorithm as follows. Given the stepsize $\gamma_k$, modulus $\mu_G$ and $\mu_\Phi$, and growth parameter $Q$ (see \eqref{QuadraticCondition}), {\color{black} let us define
\begin{equation}
a_k := (\mu_\Phi \gamma_k)/Q
\mbox{ and } A_k := \left\{
            \begin{array}{ll}
              1, & \hbox{$k=1$;} \\
              \prod_{i\leq k,\ i\in\B}(1-a_i), & \hbox{$k>1$,}
            \end{array}
          \right.
\end{equation}
and denote
\begin{equation}
b_k := (\mu_G \gamma_k)/Q
\mbox{ and } B_k := \left\{
            \begin{array}{ll}
              1, & \hbox{$k=1$;} \\
              \prod_{i=1}^k(1-b_i), & \hbox{$k>1$.}
            \end{array}
          \right.
\end{equation}
Also the probability distribution of $R$ is modified to
\begin{equation}\label{PR2}
\Prob\{R=k\}=\tfrac{\gamma_k/A_k}{\tsum_{i\in \mathcal{B}}\gamma_i/A_i},k\in \mathcal{B}.
\end{equation}}
The following result shows some simple but important properties for the modified CSPA method applied to problem \eqref{para1}-\eqref{para2}.
\begin{prop}\label{Proposition 4}
For any $s\leq k\leq m$, we have
\begin{align}
    \tsum_{k\in \B}\tfrac{\gamma_k}{A_k} [\Phi(x_k,y_k,\zeta_k)-\Phi(x_k,y,\zeta_k)]\leq (1-\tfrac{\mu_\Phi \gamma_s}{Q})V_Y(y_s,y)+\tfrac{1}{2}\tsum_{k\in \mathcal{B}}\tfrac{\gamma_k^2}{A_k}\| \Phi'(x_k,y_k,\zeta_k)\|_*^2, \ \ \forall y\in Y\label{4.1}\\
\tsum_{k=\tau(s)}^{\tau(N)}\tfrac{\gamma_k}{B_k}\left[\eta_k-G(x,\xi_k)\right] \leq \left(1-\tfrac{\mu_G \gamma_s}{Q}\right)V_X(x_s,x) +\tfrac{1}{2}\tsum_{k=\tau(s)}^{\tau(N)}\tfrac{\gamma_k^2}{B_k}\| G'(x_k,\xi_k)\|_*^2,\ \ \forall x \in X .\label{4.2}
\end{align}
\end{prop}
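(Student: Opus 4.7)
The plan is to mimic the proof of Proposition \ref{Proposition 2}, but to exploit the cleaner decoupling that Algorithm 2 provides: by construction $y_{k+1}=y_k$ whenever $k\in\mathcal{N}$ and $x_{k+1}=x_k$ whenever $k\in\mathcal{B}$, so the recursions for $V_Y(y_k,y)$ and $V_X(x_k,x)$ are driven by disjoint sets of indices. Hence \eqref{4.1} and \eqref{4.2} can be established independently, each by iterating a one-step strong-convexity recursion only over the ``active'' iterations.

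For \eqref{4.1}, I first fix $k\in\mathcal{B}$ and apply Lemma \ref{Lemma 1} to the $y$-prox-mapping to obtain
\[
V_Y(y_{k+1},y)\le V_Y(y_k,y)+\gamma_k\langle \Phi'(x_k,y_k,\zeta_k),y-y_k\rangle+\tfrac{1}{2}\gamma_k^2\|\Phi'(x_k,y_k,\zeta_k)\|_*^2.
\]
The strong convexity assumption \eqref{SCassumption3} in $y$ bounds $\langle \Phi'(x_k,y_k,\zeta_k),y_k-y\rangle$ from below by $\Phi(x_k,y_k,\zeta_k)-\Phi(x_k,y,\zeta_k)+(\mu_\Phi/2)\|y_k-y\|^2$, and the quadratic growth condition \eqref{QuadraticCondition} then dominates $(\mu_\Phi/2)\|y_k-y\|^2$ by $(\mu_\Phi/Q)V_Y(y_k,y)=a_kV_Y(y_k,y)$. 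This yields the single-step recursion
\[
V_Y(y_{k+1},y)\le(1-a_k)V_Y(y_k,y)-\gamma_k[\Phi(x_k,y_k,\zeta_k)-\Phi(x_k,y,\zeta_k)]+\tfrac{1}{2}\gamma_k^2\|\Phi'(x_k,y_k,\zeta_k)\|_*^2
\]
for $k\in\mathcal{B}$, while for $k\in\mathcal{N}$ the left-hand side is simply $V_Y(y_k,y)$ and no cost or contraction appears. Dividing by the corresponding $A_k$ (which by definition collects only factors $(1-a_i)$ with $i\in\mathcal{B}\cap\{1,\dots,k\}$) and telescoping over $k\in\mathcal{B}$ in the style of Lemma \ref{Lemma 3}, then discarding the nonnegative term $V_Y(y_{N+1},y)/A_N$ on the left, yields \eqref{4.1} once one recognizes $\rho_k=\gamma_k/A_k$ and bounds the initial contraction factor by $(1-\mu_\Phi\gamma_s/Q)$.

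The argument for \eqref{4.2} is completely symmetric in the roles of $(y,\Phi,\mu_\Phi,Y)$ and $(x,G,\mu_G,X)$: for $k\in\mathcal{N}$, combining Lemma \ref{Lemma 1} applied to the $x$-prox-mapping with the strong convexity \eqref{SCassumption4} of $G$ in $x$, the quadratic growth of $V_X$, and the defining inequality $G(x_k,\xi_k)\ge\eta_k$ valid on $\mathcal{N}$, produces
\[
V_X(x_{k+1},x)\le(1-a_k)V_X(x_k,x)-\gamma_k[\eta_k-G(x,\xi_k)]+\tfrac{1}{2}\gamma_k^2\|G'(x_k,\xi_k)\|_*^2,
\]
and on $\mathcal{B}$ the $V_X$-recursion is trivial. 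Telescoping over $\mathcal{N}$ via Lemma \ref{Lemma 3} and weighting by $\rho_k$ then gives \eqref{4.2}.

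The main technical subtlety, and the only place requiring real care, is the bookkeeping in the telescoping step: since the active iterations for each recursion form a scattered subset of $\{s,\dots,N\}$, one has to verify that the products $A_k$ retain only the relevant factors and that ``skipped'' iterations contribute multiplicatively by $1$ (because the prox-distance is literally preserved there). Once the active indices are re-enumerated consecutively, the remaining computation is mechanical and parallels the proof of Proposition \ref{Proposition 2} verbatim; the only novelty compared to that earlier argument is the separation of concerns brought about by the asymmetric update rule of Algorithm 2.
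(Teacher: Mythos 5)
Your proposal is correct and follows essentially the same route as the paper's proof: a one-step recursion from Lemma \ref{Lemma 1} combined with strong convexity and the quadratic growth condition \eqref{QuadraticCondition}, the observation that inactive iterations leave the relevant prox-distance unchanged, and a telescoping via Lemma \ref{Lemma 3} with $A_k$ built from the active indices only, followed by dropping the nonnegative left-hand side. Your remark on the bookkeeping of the scattered active index sets is exactly what the paper's definition of $A_k$ (as a product over $i\in\mathcal{B}$, $i\le k$, respectively $i\in\mathcal{N}$, $i\le k$) is designed to handle.
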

\begin{proof}
Using Lemma~\ref{Lemma 1} and the strong convexity of $\Phi$ w.r.t. $y$, for $k\in B$, we have
$$
\begin{aligned}
    V_Y(y_{k+1},y) &\leq V_Y(y_k,y)-\gamma_k\langle \Phi'(x_k,y_k,\zeta_k),y_k-y\rangle+\tfrac{1}{2}\gamma_k^2\| \Phi'(x_k,y_k,\zeta_k)\|_*^2 \\
     &\leq V_Y(y_k,y)-\gamma_k\left[\Phi(x_k,y_k,\zeta_k)-\Phi(x_k,y,\zeta_k)+\tfrac{\mu_\Phi}{2}\|y_k-y\|^2\right]+\tfrac{1}{2}\gamma_k^2\| \Phi'(x_k,y_k,\zeta_k)\|_*^2 \\
     &\leq \left(1-\tfrac{\mu_\Phi \gamma_k}{Q}\right)V_Y(y_k,y) - \gamma_k[\Phi(x_k,y_k,\zeta_k)-\Phi(x_k,y,\zeta_k)] + \tfrac{1}{2}\gamma_k^2\| \Phi'(x_k,y_k,\zeta_k)\|_*^2.
\end{aligned}
$$
Also note that $V_Y(y_{k+1},y)=V_Y(y_k,y)$ for all $k\in\N$. Summing up these relations for all $k\in \B\cup \N$ and using Lemma \ref{Lemma 3}, we obtain
\begin{equation}\label{4.7}
    \tfrac{V_Y(y_{N},y)}{A_{N+1}} \leq \left(1-\tfrac{\mu_\Phi \gamma_s}{Q}\right)V_Y(y_s,y) - \tsum_{k\in \mathcal{B}}\tfrac{\gamma_k}{A_k}[\Phi(x_k,y_k,\zeta_k)-\Phi(x_k,y,\zeta_k)]+\tfrac{1}{2}\tsum_{k\in \mathcal{B}}\tfrac{\gamma_k^2}{A_k}\| \Phi'(x_k,y_k,\zeta_k)\|_*^2.
\end{equation}
Similarly for $\tau(s)\leq k\leq \tau(N)$, we have
$$
\begin{aligned}
    V_X(x_{k+1},x) &\leq V_X(x_k,x)-\gamma_k\langle G'(x_k,\xi_k),x_k-x\rangle+\tfrac{1}{2}\gamma_k^2\| G'(x_k,\xi_k)\|_*^2 \\
     &\leq V_X(x_k,x)-\gamma_k\left[G(x_k,\xi_k)-G(x,\xi_k)+\tfrac{\mu_G}{2}\|x_k-x\|^2\right]+\tfrac{1}{2}\gamma_k^2\| G'(x_k,\xi_k)\|_*^2 \\
     &\leq \left(1-\tfrac{\mu_G \gamma_k}{Q}\right)V_X(x_k,x) - \gamma_k[G(x_k,\xi_k)-G(x,\xi_k)] + \tfrac{1}{2}\gamma_k^2\| G'(x_k,\xi_k)\|_*^2,
\end{aligned}
$$
Summing up these relations for $\tau(s)\leq k\leq \tau(N)$ and using Lemma \ref{Lemma 3}, we have
\begin{equation}\label{4.8}
    \tfrac{V_X(x_{N+1},x)}{A_N} \leq \left(1-\tfrac{\mu_G \gamma_s}{Q}\right)V_X(x_s,x) - \tsum_{k=\tau(s)}^{\tau(N)}\tfrac{\gamma_k}{A_k}[\eta_k-G(x,\xi_k)]+\tfrac{1}{2}\tsum_{k=\tau(s)}^{\tau(N)}\tfrac{\gamma_k^2}{A_k}\| G'(x_k,\xi_k)\|_*^2.
\end{equation}
Using the facts that $V_Y(y_{N+1},y)/A_N\geq 0$ and $V_X(x_{N+1},x)/A_N \geq 0$, and rearranging the terms in \eqref{4.7} and \eqref{4.8}, we obtain \eqref{4.1} and \eqref{4.2}, respectively.

\end{proof}

\vgap

Lemma \ref{Lemma 7} below provides a sufficient condition which guarantees that the output solution $(\bar x_R,{y}_R)$ is well-defined.
\begin{lem}\label{Lemma 7}
{\color{black} The following statements hold.
\begin{description}
  \item[a)] Under Assumption~\ref{CSPA_assump1}, if for any $\lambda>0$, we have
\begin{equation}\label{4.3.1}
    \tfrac{N-s+1}{2}\min_{k\in \N}{\tfrac{\gamma_k\eta_k}{B_k}} > \left(1-\tfrac{\mu_G\gamma_s}{Q}\right)\lambda D_X^2+\lambda\tfrac{M_G^2}{2}\tsum_{k=s}^{N}\tfrac{\gamma_k^2}{B_k},
\end{equation}
then $\Prob\{|\mathcal{B}|\geq \tfrac{N-s+1}{2}\} \geq 1- 1/\lambda$.
  \item[b)] Under Assumption~\ref{CSPA_lighttail}, if for any $\lambda>0$, we have
\begin{equation}\label{4.3.2}
    \tfrac{N-s+1}{2}\min_{k\in \N}{\tfrac{\gamma_k\eta_k}{B_k}} > \left(1-\tfrac{\mu_G\gamma_s}{Q}\right) D_X^2+(1+\lambda)\tfrac{M_G^2}{2}\tsum_{k=\tau(s)}^{\tau(N)}\tfrac{\gamma_k^2}{B_k}
    +\lambda\sigma\sqrt{\tsum_{k=s}^{N}\tfrac{\gamma_k^2}{B_k^2}},
\end{equation}
then $\Prob\{|\mathcal{B}|\geq \tfrac{N-s+1}{2}\} \geq 1- \exp\{-\lambda\}-\exp\{-\lambda^2/3\}$.
\end{description}}
\end{lem}
\begin{proof}
The proof is similar to the one of Lemma~\ref{Lemma 6} and hence the details are skipped.
\end{proof}

\vgap

Now let us establish the rate of convergence of the modified CSPA method for problem \eqref{para1}-\eqref{para2}.
\begin{thm}\label{Theorem 5}
Suppose that $\{\gamma_k\}$ and $\{\eta_k\}$ are chosen according to Lemma~\ref{Lemma 7}. Then
  under Assumption~\ref{CSPA_assump1}, we have for any $\lambda>0$,
\begin{align}
 &\Prob\left\{\bbe_R[\phi(\bar x_R,y_R)-\phi(\bar x_R,y^*(\bar x_R))] \geq \lambda\left(\tsum_{k\in B}\tfrac{\gamma_k}{A_k}\right)^{-1}\left[(1-\tfrac{\mu_\Phi\gamma_s}{Q})D_Y^2+\tfrac{M_\Phi^2}{2}\tsum_{k\in B}\tfrac{\gamma_k^2}{A_k}\right]\right\}\leq \tfrac{1}{\lambda},\label{4.4.1}\\
 &\Prob\left\{g(\bar x_R) \geq  \eta_R+\lambda\sigma\tfrac{\sqrt{\tsum_{k=\tau(s)}^{\tau(N)}\gamma_k^2/B_k^2}}{\tsum_{k=\tau(s)}^{\tau(N)}\gamma_k/B_k}\right\}\leq \tfrac{1}{\lambda^2}.\label{4.4.2}
\end{align}
  In addition,
  under Assumption~\ref{CSPA_lighttail}, we have for any $\lambda>0$,
  \begin{align}
 &\Prob\left\{\bbe_R[\phi(\bar x_R,y_R)-\phi(\bar x_R,y^*(\bar x_R))] \geq K_0+\lambda K_1\right\}\leq \exp\{-\lambda\}+\exp\{-\lambda^2/3\},\label{4.5.1}\\
 &\Prob\left\{g(\bar x_R) \geq \eta_R+\lambda\sigma\tfrac{\sqrt{\tsum_{k=\tau(s)}^{\tau(N)}\gamma_k^2/B_k^2}}{\tsum_{k=\tau(s)}^{\tau(N)}\gamma_k/B_k}\right\}\leq \exp\{-\lambda^2/3\},\label{4.5.2}
\end{align}
where $K_0 = \left(\tsum_{k\in \B}\tfrac{\gamma_k}{A_k}\right)^{-1}\left[(1-\tfrac{\mu_\Phi\gamma_s}{Q})D_Y^2+\tfrac{M_\Phi^2}{2}\tsum_{k\in \B}\tfrac{\gamma_k^2}{A_k}\right]$ \newline
and $K_1 =  \left(\tsum_{k\in \B}\tfrac{\gamma_k}{A_k}\right)^{-1}\left[M_\Phi^2\tsum_{k\in \B}\tfrac{\gamma_k^2}{A_k}+4M_\Phi D_Y\sqrt{\tsum_{k\in \B}\tfrac{\gamma_k^2}{A_k^2}}\right]$.
\end{thm}
\begin{proof}
The proof is similar to the proof of Theorem~\ref{Theorem 4}, and hence the details are skipped.
\end{proof}

\vgap

Now we provide a specific selection of $\{\gamma_k\}$ and $\{\eta_k\}$ that satisfies the condition of Lemma~\ref{Lemma 7}. While the selection of $\eta_k$ only depends on iteration index $k$, i.e.,  for some $\rho \in (0,1)$,
\begin{equation}\label{eta4}
\eta_k = \tfrac{QM_G^2}{\tau(k)\mu_G\rho}
\end{equation}
under Assumption~\ref{CSPA_assump1} and
\begin{equation}\label{eta5}
\eta_k = \tfrac{QM_G^2}{\tau(k)\mu_G}(1+\tfrac{1}{\rho})+\tfrac{\sigma}{\tau(k)\rho}, 
\end{equation}
under Assumption~\ref{CSPA_lighttail},
the selection of $\gamma_k$ depends on the particular position of iteration index $k$ in set $\B$ or $\N$. More specifically, let $\tau_{\B(k)}$ and $\tau(k)$ be the position of index $k$ in set $\B$ and set $\N$, respectively (for example, $\B= \{1,3,5,9,10\}$ and $\N = \{2,4,6,7,8\}$. If $k=9$, then $\tau_{B(k)}=4$). We define $\gamma_k$ as
\begin{equation}\label{gamma4}
\gamma_k = \left\{
                  \begin{array}{ll}
                    \tfrac{2Q}{\mu_\Phi(\tau_{\B(k)}+1)}, & \hbox{$k\in \B$;} \\
                    \tfrac{2Q}{\mu_G(\tau(k)+1)}, & \hbox{$k\in \N$.}
                  \end{array}
                \right.
\end{equation}
Such a selection of $\gamma_k$ can be conveniently implemented by using two separate counters in each iteration to represent $\tau_{\B(k)}$ and $\tau(k)$.

\begin{cor}\label{Coro 5}
Let $s=\tfrac{N}{2}+1$, $\eta_k$ and $\gamma_k$ be given in \eqref{eta4}, \eqref{eta5} and \eqref{gamma4}, respectively. Then we have under Assumption~\ref{CSPA_assump1}, we have for any $\lambda>0$,
\begin{align*}
 &\Prob\left\{\bbe_R[\phi(\bar x_R,y_R)-\phi(\bar x_R,y^*(\bar x_R))] \leq \lambda\tfrac{8QM_\Phi^2}{(N+2)\mu_\Phi}\right\}\geq (1-\tfrac{1}{\lambda})(1-\rho),\\
 &\Prob\left\{g(\bar x_R) \leq \lambda\tfrac{2QM_G^2}{\rho N\mu_G}\right\}\geq 1-\tfrac{1}{\lambda^2}.
\end{align*}
In addition, under Assumption~\ref{CSPA_lighttail}, we have for any $\lambda>0$,
  \begin{align*}
 &\Prob\left\{\bbe_R[\phi(\bar x_R,y_R)-\phi(\bar x_R,y^*(\bar x_R))] \leq K_0+\lambda K_1\right\}\\
 &\geq (1-\exp\{-\lambda\}-\exp\{-\lambda^2/3\})(1-\exp\{-\rho\}-\exp\{-\rho^2/3\}),\\
 &\Prob\left\{g(\bar x_R) \leq\tfrac{2QM_G^2}{\rho N\mu_G}+\lambda\tfrac{2\sigma}{\sqrt{N}}\right\}\geq 1-\exp\{-\lambda^2/3\},
\end{align*}
where $K_0 = 8QM_\Phi^2/[(N+2)\mu_\Phi]$ and $K_1 =8QM_\Phi^2/[(N+2)\mu_\Phi]+ 64M_\Phi D_Y/\sqrt{N}$.
\end{cor}
\begin{proof}
The proof is similar to the proof of Corollary~\ref{Coro 4} and hence the details are skipped.
\end{proof}

Note that Corollary~\ref{Coro 5}.a) implies an ${\cal O}(1/N)$ rate of convergence, while Corollary~\ref{Coro 5}.b) 
show an ${\cal O}(1/\sqrt{N})$ rate of convergence with much improved dependence on $\lambda$. One possible approach
to improve the result in part b) is to shrink the feasible set $Y$ from time to time in order to obtain an ${\cal O} (1/N)$
rate of convergence (see \cite{GhaLan13-1}).

\setcounter{equation}{0}
\section{Numerical Experiment}
 In this section, we present some numerical results of our computational experiments for solving two problems: an asset allocation problem with conditional value at risk (CVaR) constraint and a parameterized classification problem. More specifically, we report the numerical results obtained from the CSA and CSPA method applied to these two problems in Subsection 4.1 and 4.2, respectively.

 \subsection{Asset allocation problem}
 Our goal of this subsection is to examine the performance of the CSA method applied to the CVaR constrained problem in \eqref{CVaR}.

Apparently, there is one problem associated with applying the CSA algorithm to this model -- the feasible region $X$ is unbounded. Lan, Nemirovski and Shapiro (see \cite{lns11} Section 4.2) show that $\tau$ can be restricted to
$
\left[\underline{\mu}+\sqrt{\tfrac{\beta}{1-\beta}}\sigma, \bar{\mu}+\sqrt{\tfrac{1-\beta}{\beta}}\sigma\right],
$
where $\underline{\mu}:= \min_{y\in Y}\{-\bar{\xi}^T y\}$ and $\bar{\mu}:= \max_{y\in Y}\{-\bar{\xi}^T y\}$.

In this experiment, we consider four instances. The first three instances are randomly generated according to the factor model in Goldfarb and Iyengar (see Section 7 of \cite{goldfarb2003robust} ) with different number of stocks ({\color{black} $d =500$, $1000$ and $2000$}), while the last instance consists of the 95 stocks from $S\&P100$ (excluding SBC, ATI, GS, LU and VIA-B) obtained from \cite{wang2008sample}, the mean $\bar{\xi}$ and covariance $\Sigma$ are estimated by the historical monthly data from 1996 to 2002. The reliability level $\beta = 0.05$, {\color{black} the number of samples to estimate $g(x)$ is $J = 100$} and the number of samples used to evaluate the solution is $n= 50,000$.
{\color{black} It is worth noting that, by utilizing the linear structure of $\xi^Tx$ (where $x\in \bbr^d $) in constraint function, in k-th iteration we generate J-sized i.i.d. samples of $\bar \xi:=\xi^Tx_k$ (with dimension 1)
to estimate $\xi^Tx$ in constraint function, instead of J-sized i.i.d. samples of $\xi$ (with dimension $d$).}
For SAA algorithm, the deterministic SAA problem to \eqref{CVaR} is defined by
\begin{equation} \label{CVaR_SAA}
\begin{array}{ll}
  \min_{x, \tau} &  -\mu^T x  \\
  \mbox{s.t.} & \tau + \tfrac{1}{\beta N}\tsum_{i=1}^N[-\xi_i^T x-\tau]_+\leq 0, \\
   &  \tsum_{i=1}^n x_i=1, x \ge 0,
\end{array}
\end{equation}
We implemented the SAA approach by using Polyak's subgradient method for solving convex programming problems with function constraints (see \cite{polyak1967general}). The main reasons why we did not use the linear programming (LP) method to \eqref{CVaR_SAA} include: 1) problem \eqref{CVaR_SAA} might be infeasible for some instances; and 2) we tried the LP method with CVX toolbox for an instance with 500 stocks and the CPU time is thousands times larger than that of the CSA method.
In our experiment, we adjust the stepsize strategy by multiplying $\gamma_k$ and $\eta_k$ with some scaling parameters $c_g$ and $c_e$, respectively. These parameters are chosen as a result of pilot runs of our algorithm (see \cite{lns11} for more details). We have found that the ``best parameters" in Table~\ref{CSA_expara} slightly outperforms other parameter settings we have considered.
\begin{table}[h]
\begin{center}
\caption{The stepsize factor}\label{CSA_expara}
\footnotesize
\begin{tabular}{|c|c|c|c|}
  \hline      &   & best $c_g$ & best $c_e$ \\
  \hline    Number  &500&  0.5 & 0.005\\
        of stocks &1000& 0.5 & 0.05  \\
           &2000& 0.5 & 0.05 \\
  \hline
\end{tabular}
\end{center}
\end{table}
\\

{\bf Notations in Tables~\ref{CSA_exd500}-\ref{CSA_exCVAR}.}
\begin{description}
  \item [N:] the sample size( the number of steps in SA, and the size of the sample used to SAA approximation).
  \item [Obj.:] the objective function value of our solution, i.e. the loss of the portfolio.
  \item [Cons.:] the constraint function value of our solution.
  \item [CPU:] the processing time in seconds for each method.
\end{description}

\begin{table}[h]
\begin{center}
\caption{Random Sample with 500 Assets}\label{CSA_exd500}
\footnotesize
\begin{tabular}{|c|c|c|c|c|c|}
  \hline      &   & $N$=500 & $N$=1000 & $N$=2000 & $N$=5000 \\
  \hline      &Obj.& -4.883 & -4.870 & -4.953 & -4.984\\
         CSA &Cons.& 5.330 & 4.096 & 5.167 & 2.859 \\
              &CPU& 1.671e-01 & 3.383e-01 & 6.271e-01 &1.470e+00 \\
  \hline      &Obj.& -4.978 & -4.981 & -4.977 & -4.977 \\
         SAA  &Cons.& 4.372 & 3.071 & 2.330 & 2.249  \\
              &CPU& 2.031e+00 & 9.926e+00 & 4.132e+01 & 2.591e+02 \\
  \hline
\end{tabular}
\end{center}
%
\begin{center}
\caption{Random Sample with 1000 Assets}\label{CSA_exd1000}
\footnotesize
\begin{tabular}{|c|c|c|c|c|c|}
  \hline      &   & $N$=500 & $N$=1000 & $N$=2000 & $N$=5000 \\
  \hline      &Obj.& -4.532 & -4.704 &-4.838 & -4.949\\
         CSA &Cons.& 27.660 & 24.901 & 23.825 & 20.785 \\
              &CPU& 4.193e-01 &  8.578e-01 & 1.659e+00 & 4.001e+00 \\
  \hline      &Obj.& -4.965 & -4.981 & -4.981 & -4.977 \\
         SAA  &Cons.& 60.421 &  47.745 & 33.940 & 20.357  \\
              &CPU& 1.513e+01 & 5.954e+01 & 2.774e+02 & 1.524e+03 \\
  \hline
\end{tabular}
\end{center}

\begin{center}
\caption{Random Sample with 2000 Assets}\label{CSA_exd2000}
\footnotesize
\begin{tabular}{|c|c|c|c|c|c|}
  \hline      &   & $N$=500 & $N$=1000 & $N$=2000 & $N$=5000 \\
  \hline      &Obj.& -4.299 & -4.077 & -4.355 & -4.859\\
         CSA &Cons.& 144.92& 112.54 & 89.74 & 82.65  \\
              &CPU& 1.374e+00 & 2.810e+00 & 5.538e+00 & 2.716e+01 \\
  \hline      &Obj.& -4.752 & -4.699 & -4.721  & -4.727 \\
         SAA  &Cons.& 279.43 & 218.96 & 147.93 & 94.46  \\
              &CPU& 1.968e+01 & 6.571e+01 & 2.940e+02 & 3.697e+03 \\
  \hline
\end{tabular}
\end{center}

\end{table}

\begin{table}[h]
\begin{center}
\caption{Comparing the CSA and SAA for the CVaR model}\label{CSA_exCVAR}
\footnotesize
\begin{tabular}{|c|c|c|c|c|c|c|}
  \hline      &   & N=500 & N=1000 & N=2000 & N=5000 & N=10000 \\
  \hline      &Obj.& -3.531 & -3.537 & -3.542 & -3.548 & -3.560\\
         CSA &Cons.& 3.382e+00 & 2.188e-01 & 1.106e-01 & 2.724e-01  & -7.102e-01  \\
              &CPU& 8.315e-02 & 1.422e-01 & 2.778e-01 & 7.251e-01 & 1.415e+00  \\
  \hline      &Obj.& -3.530 & -3.541 & -3.541 & -3.544 & -3.559\\
         SAA  &Cons.& 3.385e+00 & 7.163e-01 & 6.989e-01 & 6.988e-01 &  7.061e-01\\
              &CPU& 3.155e+00 & 1.221e+01 & 4.834e+01 & 3.799e+02 & 1.462e+03\\
  \hline
\end{tabular}
\end{center}

\end{table}
The following conclusions can be made from the numerical results. First, as far as the quality of solutions is concerned, the CSA method is at least as good as SAA method and it may outperform SAA for some instances especially as $N$ increases. Second, the CSA method can significantly reduce the processing time than SAA method for all the instances.
\subsection{Classification and metric learning problem} \label{sec_num_metric}
In this subsection, our goal is to examine the efficiency of the CSPA algorithm applied to a classification problem with the metric as parameter. In this experiment, we use the expectation of hinge loss function, described in \cite{Singer07pegasos:primal}, as objective function, and formulate the constraint with the loss function of metric learning problem in \cite{DuBaMaWa11}, see formal definition in \eqref{svm}-\eqref{metric}. For each $i,j$, we are given samples $u_i, u_j\in \bbr^d$ and a measure $b_{ij} \geq 0$ of the similarity between the samples $u_i$ and $u_j$ ($b_{ij} =0$ means $u_i$ and $u_j$ are the same). The goal is to learn a metric $A$ such that $\langle (u_i-u_j), A(u_i-u_j) \rangle\approx b_{ij}$, and to do classification among all the samples $u$ projected by the learned metric $A$.

For solving this class of problems in machine learning, one widely accepted approach is to learn the metric in the first step and then solve the classification problem with the obtained optimal metric. However, this approach is not applicable to the online setting since once the dataset is updated with new samples, this approach has to go through all the samples to update $A$ and $\omega$. On the other hand, the CSPA algorithm optimizes the metric $A$ and classifier $\omega$ simultaneously, and only needs to take one new sample in each iteration.

In this experiment, our goal is to test the solution quality of the CSPA algorithm with respect to the number of iterations. More specifically, we consider $2$ instances of this problem with different dimension ($d = 100$ and $200$, respectively). Since we are dealing with the online setting, our sample size for training $A$ and $\omega$ is increasing with the number of iterations. The size for the sample
used to estimate the parameters and the one used to evaluate the quality of solution (or testing sample) are set to $100$ and $10,000$, respectively. Within each trial, we test the objective and constraint value of the output solution over training sample and testing sample, respectively.
Since $R$ is randomly picked up from all the series $\{\bar x_k, y_k\}$, we generate 5 candidate $R$, instead of one, in order to increase the probability of getting a better solution. Intuitively, the latter solutions in the series should be better than the earlier ones, hence, we also put the last pair of the solution $(\bar x_N, y_N)$ into the candidate list. In each trial, we compare these 6 candidate solutions. First, we choose three pairs with smallest constraint function values, then, choose the one with the smallest objective function value from these three selected solutions as our output solution.


Table~\ref{CSPA_exd100} and Table~\ref{CSPA_exd200} shows the CSPA method decreases the objective value and constraint value as the sample size (number of iterations $N$) increases.
These experiments demonstrate that we can improve both the metric and the classifier simultaneously by using the CSPA method as more and more data are collected.\\
{\bf Notations in Table~\ref{CSPA_exd100} and~\ref{CSPA_exd200}.}\\
Obj. Train: The objective function value using training sample at the output solution.\\
Cons. Train: The constraint function value using training sample at the output solution.\\
Obj. Test: The objective function value using testing sample at the output solution.\\
Cons. Test: The constraint function value using testing sample at the output solution.

\begin{table}[h]
\begin{center}
\caption{d = 100}\label{CSPA_exd100}
\footnotesize
\begin{tabular}{|c|c|c|c|c|}
  \hline  $N$& Obj. Train & Cons. Train & Obj. Test & Cons. Test\\
  \hline   100& 3.175 & 3.056  & 1.042 & 3.068 \\
  \hline   200 & 2.737 & 3.058 & 0.811 & 3.006 \\
  \hline   600 & 0.654 & 3.077 & 0.157 & 3.104 \\
  \hline   800 & 0.529 & 3.087  &  0.126 & 3.102 \\
  \hline   1000 & 0.398 & 3.057 & 0.102 & 3.082 \\
  \hline
\end{tabular}
\end{center}

\begin{center}
\caption{d = 200}\label{CSPA_exd200}
\footnotesize
\begin{tabular}{|c|c|c|c|c|}
  \hline  $N$& Obj. Train & Cons. Train & Obj. Test & Cons. Test\\
  \hline   100& 0.716 & 1.137  & 0.699 & 1.132 \\
  \hline   200 & 0.374 & 1.061 & 0.371 & 1.030\\
  \hline   1000 & 0.360 & 1.020 & 0.364 & 1.031 \\
  \hline   2000 & 0.351 & 1.016  &  0.355 & 1.030 \\
  \hline   5000 & 0.291 & 0.951 & 0.135 & 0.989 \\
  \hline
\end{tabular}
\end{center}
\end{table}

\section{Conclusions}
In this paper, we present a new stochastic approximation type method, the CSA method, for solving the stochastic convex optimization problems with function or expectation constraints. 
Moreover, we show that a variant of CSA method, the CSPA method, is applicable to a class of parameterized stochastic problem in \eqref{para1}-\eqref{para2}. We show that these methods exhibit theoretically optimal rate of convergence for solving a few different classes of function or expectation constrained stochastic optimization problems and demonstrated their effectiveness through some preliminary numerical experiments.

\renewcommand\refname{Reference}

\bibliographystyle{abbrv}
\bibliography{glan-bib}

\end{document}